\documentclass[sn-mathphys]{sn-jnl}

\usepackage{amssymb,amsmath,mathrsfs, amsthm}

\jyear{2021}%

\theoremstyle{thmstyleone}%
\numberwithin{equation}{section}
\newtheorem{theorem}{Theorem}[section]
\newtheorem{proposition}[theorem]{Proposition}%
\newtheorem{corollary}{Corollary}[section]
\theoremstyle{thmstyletwo}%
\newtheorem{example}{Example}%
\newtheorem{remark}[theorem]{Remark}%
\newtheorem{lemma}{Lemma}[section]

\theoremstyle{thmstylethree}%
\newtheorem{definition}[theorem]{Definition}%

\begin{document}

\title[Article Title]{Schwarz lemma on bounded symmetric domains endowed with holomorphic invariant K\"ahler--Berwald metrics\\
}

\author{\fnm{Yong} \sur{He$^1$}}\email{heyong@xjnu.edu.cn}
\author{\fnm{Chunping} \sur{Zhong$^2$}\footnote{Corresponding author}}\email{zcp@xmu.edu.cn}


\affil[1]{\orgdiv{School of Mathematical Sciences}, \orgname{Xinjiang Normal University},\city{Urumqi}, \postcode{830017}, \country{China}}

\affil[2]{\orgdiv{School of Mathematical Sciences}, \orgname{Xiamen University}, \city{Xiamen}, \postcode{361005}, \country{China}}



\abstract{
We prove  a rigidity theorem: a globally symmetric complex Finsler space $(M, J, F)$ is necessarily a K\"ahler-Berwald space, namely $F$ must be a K\"ahler-Berwald metric. We  also obtain a Schwarz lemma for holomorphic mappings $f$ from an arbitrary bounded symmetric domain $\mathfrak{D}$ into itself whenever $\mathfrak{D}$ is endowed with an $\mbox{Aut}(\mathfrak{D})$-invariant K\"ahler-Berwald metric $F$ such that its holomorphic sectional curvature is bounded below and above by negative constants $-K_1<0$ and $-K_2<0$, respectively. The novelty of this Schwarz lemma is that the Lu constant of $(\mathfrak{D},F)$ is optimal for each given $F$ whenever rank$(\mathfrak{D})\geq 2$.

}

\keywords{Schwarz lemma; Symmetric complex Finsler space; holomorphic invariant metric; K\"ahler--Berwald metric.}


\pacs[MSC Classification]{32F45,53C56}

\maketitle

\section{Introduction and statement of the main theorem}

A \emph{complex Finsler manifold} is a triple \((M,J,F)\), where \(M\) is a (connected) complex manifold, \(J\) is a complex structure on \(M\), and \(F\colon T^{1,0}M\to[0,+\infty)\) is a strongly convex complex Finsler metric in the sense of Abate and Patrizio \cite{AP}. More precisely, for
\[
y_o:=\frac{1}{2}(y-iJy)\in T^{1,0}M,\qquad y\in TM,
\]
the function
\[
F^\circ(y):=F(y_o)
\]
is a real Finsler metric on \(M\) (regarded as a smooth manifold). It is easy to check that
\[
F^\circ(Jy)=F^\circ(y),\qquad \forall\, y\in TM.
\]
A complex Finsler manifold \((M,J,F)\) is called \emph{homogeneous} if the group \(A(M,J,F)\) of holomorphic isometries of \((M,J,F)\) acts transitively on \(M\). Deng \cite{Deng} proved that a homogeneous complex Finsler manifold \((M,J,F)\) can be written as a coset space \(G/H\), where \(G=A_0(M,J,F)\) is the identity component of \(A(M,J,F)\) and \(H\) is the isotropy subgroup of \(G\) at a fixed point \(o\in M\).

A complex Finsler space \((M,J,F)\) is called \emph{globally symmetric} if for every point \(p\in M\), there exists an involutive holomorphic isometry \(\sigma_p\) of \((M,J,F)\) such that \(p\) is an isolated fixed point of \(\sigma_p\) \cite{Deng}. Clearly, every globally symmetric complex Finsler space is homogeneous.

Deng \cite{Deng} also proved that every globally symmetric complex Finsler space \((G/H,J,F)\) admits a \(G\)-invariant Riemannian metric \(Q\) on \(G/H\) such that \((G/H,J,Q)\) is a Hermitian symmetric space. Hermitian symmetric spaces were completely classified by Cartan \cite{Cartan} (see also Helgason \cite{Helgason}) as follows:

\begin{table}[htbp]
\centering
\caption{Irreducible Hermitian symmetric spaces ($p,q\geq 1$)}
\resizebox{\textwidth}{!}{%
\begin{tabular}{lllll}
\hline
 & Noncompact spaces & Compact spaces & Rank & Dimension \\
\hline
AIII
  & \(SU(p,q)/S(U_p\times U_q)\)
  & \(SU(p+q)/S(U_p\times U_q)\)
  & \(\min(p,q)\)
  & \(2pq\) \\
BDI
  & \(SO_0(p,2)/(SO(p)\times SO(2))\)
  & \(SO(p+2)/(SO(p)\times SO(2))\)
  & \(2\)
  & \(2p\) \\
DIII
  & \(SO^*(2n)/U(n)\quad (n\ge 4)\)
  & \(SO(2n)/U(n)\quad (n\ge 4)\)
  & \([\frac{n}{2}]\)
  & \(n(n-1)\) \\
CI
  & \(Sp(n,\mathbb R)/U(n)\quad (n\ge 2)\)
  & \(Sp(n)/U(n)\quad (n\ge 2)\)
  & \(n\)
  & \(n(n+1)\) \\
EIII
  & \((\mathfrak{E}_{6(-14)},\mathfrak{so}(10)+\mathbb R)\)
  & \((\mathfrak{E}_{6(-78)},\mathfrak{so}(10)+\mathbb R)\)
  & \(2\)
  & \(32\) \\
EVIII
  & \((\mathfrak{E}_{7(-25)},\mathfrak{e}_6+\mathbb R)\)
  & \((\mathfrak{E}_{7(-133)},\mathfrak{e}_6+\mathbb R)\)
  & \(3\)
  & \(54\) \\
\hline
\end{tabular}
}
\end{table}

On the other hand, Szab\'o \cite{Szabo} proved that every locally irreducible globally symmetric Riemannian manifold \(G/H\) of rank \(\geq 2\) admits infinitely many \(G\)-invariant real Finsler metrics (see Theorems 2 and 8 in \cite{Szabo}), whereas no rank-one symmetric Riemannian manifold \(G/H\) admits a non-Riemannian \(G\)-invariant real Finsler metric. He also provided a list of all locally irreducible globally symmetric non-Riemannian Berwald spaces. Consequently, the classification problem for complex Finsler symmetric spaces reduces to determining which Hermitian symmetric spaces in \textbf{Table 1} admit a \(G\)-invariant non-Riemannian complex Finsler metric \(F\).

In \cite{Zhong-b}, Zhong explicitly constructed infinitely many \(\operatorname{Aut}(P_n)\)-invariant, non-Hermitian, quadratic, strongly convex complex Finsler metrics on the unit polydisk \(P_n\) in \(\mathbb{C}^n\) \((n\ge 2)\). For irreducible bounded symmetric domains, Ge and Zhong \cite{GZ} explicitly constructed infinitely many \(\operatorname{Aut}(\mathfrak{D})\)-invariant, non-Hermitian, strongly pseudoconvex complex Finsler metrics on each classical domain \(\mathfrak{D}\) with \(\operatorname{rank}(\mathfrak{D})\ge 2\). Cao, Ge and Zhong \cite{CGZ} explicitly constructed infinitely many invariant strongly pseudoconvex complex Finsler metrics on complex Grassmannian manifolds of rank \(\ge 2\). These constructions yield numerous nontrivial examples of complex Finsler symmetric spaces in both the noncompact and compact cases, as well as in the reducible and irreducible cases. It is worth noting that all the metrics constructed explicitly in these works are non-Hermitian quadratic and special K\"ahler--Finsler metrics, i.e., K\"ahler--Berwald metrics in the sense of Aikou \cite{Aikou-a, Aikou-b}.

It is well known that Hermitian symmetric spaces play an important role in both several complex variables and complex geometry; see, for example, the fundamental works of L. K. Hua \cite{Hua}, Q. K. Lu \cite{Lu-b}, and N. Mok \cite{Mok}, among many others. The explicit constructions of nontrivial (non-Hermitian) examples of complex Finsler symmetric spaces show that there is good reason to develop the theory of non-Hermitian complex Finsler symmetric spaces.

Note that the following proposition is well known for Hermitian symmetric spaces (see Proposition 4.1 in \cite{Helgason}).

\begin{proposition}[\cite{Helgason}]\label{Helgason}
Let \((M, J, h)\) be a Hermitian symmetric space. Then \(h\) is necessarily a K\"ahler metric.
\end{proposition}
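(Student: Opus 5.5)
The plan is the classical tensor argument: show \(\nabla J=0\) for the Levi-Civita connection of \(g=\operatorname{Re}h\). A Hermitian metric is Kähler exactly when its fundamental form \(\omega(X,Y)=g(JX,Y)\) is closed. By the Newlander--Nirenberg theorem \(J\) is integrable, and for an integrable \(J\) the conditions \(d\omega=0\) and \(\nabla J=0\) are equivalent. So it suffices to show \(\nabla\omega=0\), or even just \(d\omega=0\).

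\emph{Step 1 (invariance under the symmetries).} Fix \(p\in M\) and let \(\sigma_p\) be the involutive holomorphic isometry with isolated fixed point \(p\). Since \(\sigma_p^2=\mathrm{id}\) and \(p\) is isolated, the differential \((d\sigma_p)_p\) is an involution of \(T_pM\) without eigenvalue \(1\). Hence \((d\sigma_p)_p=-\mathrm{id}\).

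\emph{Step 2 (odd tensors vanish).} Because \(\sigma_p\) is a holomorphic isometry, it preserves \(g\) and \(J\). Therefore it preserves \(\omega\), the Levi-Civita connection, and consequently the tensor \(T:=\nabla\omega\), which has type \((0,3)\). Evaluating at \(p\) gives
\[
T_p(X,Y,Z)=T_p\bigl((d\sigma_p)X,(d\sigma_p)Y,(d\sigma_p)Z\bigr)=(-1)^3T_p(X,Y,Z),
\]
so \(T_p=0\). Since \(p\) was arbitrary, \(\nabla\omega\equiv0\). This gives \(d\omega=0\), because \(d\omega\) is the antisymmetrization of \(\nabla\omega\). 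It also gives \(\nabla J=0\) directly, since \(\nabla\) preserves \(g\) and \(\omega(X,Y)=g(JX,Y)\). Thus \(h\) is Kähler.

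The only step needing care is Step 1, namely that an involution with an isolated fixed point has differential exactly \(-\mathrm{id}\) there. To prove it, I would linearize in normal coordinates at \(p\): isometries are linear in exponential coordinates, so \(\sigma_p=\exp_p\circ(d\sigma_p)_p\circ\exp_p^{-1}\) near \(p\). Any nonzero \(+1\)-eigenvector of \((d\sigma_p)_p\) would then produce a geodesic of fixed points through \(p\), contradicting isolatedness. The eigenvalues of an involution are \(\pm1\), so all of them equal \(-1\), and the invariance argument in Step 2 goes through.
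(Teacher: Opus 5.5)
Your argument is correct. It is the standard proof behind this result; the paper gives no proof of its own and simply cites Proposition 4.1 of Helgason. The idea is that $(d\sigma_p)_p=-\mathrm{id}$, so every $\sigma_p$-invariant tensor of odd order vanishes at $p$; here that tensor is $\nabla\omega$.

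The appeal to Newlander--Nirenberg and to the equivalence $d\omega=0\Leftrightarrow\nabla J=0$ is superfluous. $J$ is integrable by hypothesis, and you get $\nabla J=0$ directly from $\nabla\omega=0$ and $\nabla g=0$.
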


\textbf{Question 1.}\quad For a non-Hermitian complex Finsler symmetric space \((M, J, F)\) (that is, \(F\) is not Hermitian), is \(F\) necessarily a K\"ahler--Berwald metric?

The following theorem answers Question 1 affirmatively.

\begin{theorem}\label{thm-a}
Let \((M, J, F)\) be a globally symmetric complex Finsler space. Then \(F\) is necessarily a K\"ahler--Berwald metric.
\end{theorem}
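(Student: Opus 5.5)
We will establish two facts about \(F\): that it is Berwald (its Chern--Finsler connection coefficients \(\Gamma^i_{jk}(z,v)\) do not depend on \(v\), so they come from a torsion-free complex linear connection), and that it is Kähler--Finsler (the horizontal torsion \(\Gamma^i_{jk}-\Gamma^i_{kj}\) vanishes). Together these give a Kähler--Berwald metric in the sense of Aikou. Both facts will be derived from the symmetries \(\sigma_p\), imitating the classical proof of Proposition~\ref{Helgason}. That classical proof shows that every \(G\)-invariant tensor of odd type on a symmetric space vanishes, and that the Levi-Civita connection is the canonical connection.

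\emph{Step 1: a canonical invariant connection.} By Deng's theorem, \(M=G/H\) carries a \(G\)-invariant Hermitian symmetric metric \(Q\). By Proposition~\ref{Helgason}, \(Q\) is Kähler, so its Levi-Civita connection \(\nabla\) preserves \(J\). This gives a torsion-free complex linear connection on \(T^{1,0}M\). Since \(\nabla\) is the canonical connection of the symmetric pair \((G,H,\sigma)\), it is invariant under every \(\sigma_p\) and every element of \(G\). The key claim is that \(\nabla F=0\), i.e.\ \(F\) is constant along \(\nabla\)-parallel sections of \(T^{1,0}M\). To see this, take a geodesic \(\gamma\) of \(Q\) through \(p\) and the transvections \(\tau_t=\sigma_{\gamma(t/2)}\circ\sigma_p\). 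These are holomorphic isometries of \(F\), since each \(\sigma\) is. Their differentials realize \(\nabla\)-parallel transport along \(\gamma\), which is a standard fact for symmetric spaces. Hence parallel transport along geodesics preserves \(F\). Because any two points are joined by broken geodesics, and the holonomy of \(\nabla\) is generated inside \(H\), which acts by \(F\)-isometries, parallel transport along any curve preserves \(F\).

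\emph{Step 2: Berwald property.} A complex Finsler metric \(F\) that is parallel with respect to a complex linear connection \(\nabla\) (with coefficients \(\Gamma^i_{jk}(z)\)) satisfies, in local coordinates, \(\delta_j G=0\) for \(G=F^2\), where \(\delta_j=\partial_j-\Gamma^i_{jk}(z)v^k\dot\partial_i\). We will check that the horizontal distribution of the Chern--Finsler nonlinear connection \(N^i_j=G^{i\bar l}\partial_jG_{\bar l}\) is the unique one annihilating \(G\) in the required holomorphic/\((1,0)\) sense. From this, \(N^i_j=\Gamma^i_{jk}(z)v^k\). Differentiating in \(v\) then gives \(\Gamma^i_{jk}(z,v)=\Gamma^i_{jk}(z)\), which shows \(F\) is a complex Berwald metric.

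\emph{Step 3: Kähler condition.} The torsion \(\Gamma^i_{jk}-\Gamma^i_{kj}\) now equals the torsion of \(\nabla\), which vanishes because \(Q\) is Kähler. Alternatively, it is a \(G\)-invariant tensor of type \((1,2)\), and \(d\sigma_p=-\mathrm{id}\) on \(T_pM\) forces it to change sign, so it is zero. Hence \(F\) is strongly Kähler, and therefore Kähler--Berwald.

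\emph{Main obstacle.} The delicate point is Step 2: showing that \(\nabla G=0\) forces the Chern--Finsler connection to coincide with \(\nabla\). This uses strong pseudoconvexity, namely the invertibility of \((G_{i\bar j})\), together with a uniqueness argument. Concretely, differentiating \(\delta_jG=0\) in \(\bar v^l\) gives \(\partial_jG_{\bar l}=\Gamma^i_{jk}v^kG_{i\bar l}\), and contracting with \(G^{\bar l m}\) yields the formula for \(N^m_j\). A secondary point needing care is justifying that the transvections realize parallel transport for the complexified bundle \(T^{1,0}M\). This should follow from \(d\sigma_p\) commuting with \(J\).
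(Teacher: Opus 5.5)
Your proof is correct, but it takes a different route from the paper's.

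The paper works entirely on the real side. It cites Deng's theorem that a globally symmetric Finsler space is real Berwald, with Chern/Berwald connection equal to the Levi-Civita connection of some $G$-invariant Riemannian metric. It then uses Deng's result that all $G$-invariant metrics on $G/H$ share one Levi-Civita connection to identify that connection with the one of the Kähler metric $Q$, so the real Chern connection preserves $J$. Finally it invokes Xia--Zhong's Theorem 3.11, which turns ``real Berwald with $J$-parallel connection'' into ``Kähler--Berwald''.

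You instead stay in the complex Abate--Patrizio framework:
\begin{itemize}
\item The transvections $\sigma_{\gamma(t/2)}\circ\sigma_p$ show directly that $F$ is parallel for the Chern (= Levi-Civita) connection of $Q$ on $T^{1,0}M$.
\item Because $G=F^2$ is real and this connection has no $(0,1)$-type coefficients, parallelism is equivalent to $\delta_jG=0$.
\item Differentiating in $\bar v^l$ and contracting with $G^{m\bar l}$ gives $N^m_j=\Gamma^m_{jk}(z)v^k$.
\item Hence the Chern--Finsler coefficients $\dot\partial_kN^m_j=\Gamma^m_{jk}(z)$ are independent of $v$, and they are symmetric because $Q$ is Kähler.
\end{itemize}
In effect you reprove Deng's Berwald theorem in this Hermitian setting and replace the Xia--Zhong real-to-complex transfer with a two-line computation. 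This makes your argument self-contained and identifies the Chern--Finsler connection of $F$ explicitly as that of $Q$. The paper's version is shorter only because it outsources exactly these two steps to citations.

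Two points to tighten. First, the transvection fact needs the Finsler symmetries $\sigma_q$ to be the geodesic symmetries of $Q$, and you should say why. Each $\sigma_q$ normalizes $G$ and satisfies $d\sigma_q=-\mathrm{id}$ on $T_qM$, and a $G$-invariant metric is determined by its value at one point; together these give $\sigma_q^*Q=Q$. Second, your holonomy remark is unnecessary. $\nabla F=0$ is an infinitesimal condition, so it suffices to check it along the geodesic in each direction, which your transvection argument already does.
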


\begin{remark}
The above theorem was also obtained by Ge and Zhou \cite{GeZhou}; our proof of Theorem \ref{thm-a} differs from theirs.
\end{remark}

\begin{remark}
If \(M\) is a polydisk, then Theorem \ref{thm-a} is due to Lin and Zhong \cite{LZ}; if \(M\) is a polyball, then Theorem \ref{thm-a} is due to Lin, Wang, and Zhong \cite{LWZ}; if \(M\) is a complex Grassmannian manifold, then Theorem \ref{thm-a} is due to Cao, Ge, and Zhong \cite{CGZ}; and if \(M\) is any irreducible bounded symmetric domain of types I--IV, then Theorem \ref{thm-a} is due to Zhong \cite{Zhong-c}.
\end{remark}

In \cite{Lu-a}, Lu systematically investigated the Schwarz lemma and analytic invariants of a homogeneous domain \(\mathfrak{D}\subset\mathbb{C}^n\) with respect to the Bergman metric \(ds^2\) on \(\mathfrak{D}\). In particular, he obtained the following fundamental theorem.

\begin{theorem}[\cite{Lu-a}]
Let \(\mathfrak{D}\) be a bounded schlicht transitive domain, and let \(w=f(z)\) be any analytic mapping carrying \(\mathfrak{D}\) into itself.
\begin{enumerate}
\item[(1)] There exists a positive constant \(k\), depending only on \(\mathfrak{D}\), such that
\begin{equation}
ds^2(w,\overline{w})\leq k^2ds^2(z,\overline{z}).\label{lc}
\end{equation}
\item[(2)] If there is a point \(z\in\mathfrak{D}\) such that
\begin{equation}
ds^2(w,\overline{w})=ds^2(z,\overline{z}),\label{rigidity}
\end{equation}
then the mapping \(w=f(z)\) must map \(\mathfrak{D}\) topologically onto itself.
\end{enumerate}
\end{theorem}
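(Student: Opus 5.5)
Since $\mathfrak{D}$ is a bounded transitive domain, the automorphism group $\mathrm{Aut}(\mathfrak{D})$ acts transitively and the Bergman metric $ds^2$ is $\mathrm{Aut}(\mathfrak{D})$-invariant. The plan is to normalize both the source point $z$ and its image $w=f(z)$ to a fixed base point $o\in\mathfrak{D}$, and then reduce everything to a statement about the Jacobian at $o$.

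\textbf{Reduction to a fixed point.} Choose automorphisms $\phi,\psi$ with $\phi(o)=z$ and $\psi(w)=o$. Then $g=\psi\circ f\circ\phi$ is a holomorphic self-map of $\mathfrak{D}$ with $g(o)=o$. By invariance of $ds^2$, the inequality \eqref{lc} at $z$ is equivalent to the same inequality for $g$ at $o$, namely $ds^2_o(dg_o\xi)\le k^2 ds^2_o(\xi)$.

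\textbf{Part (1): a uniform bound.} Because $\mathfrak{D}$ is bounded, say $\mathfrak{D}\subset B(o,R)$ and $B(o,r)\subset\mathfrak{D}$, the Cauchy estimates applied to the coordinates of $g$ on $B(o,r)$ bound the Euclidean operator norm $\|dg_o\|\le C(R,r)$, uniformly in $g$. At the single point $o$, the Bergman metric is comparable to the Euclidean one with constants depending only on $\mathfrak{D}$. Combining these gives a constant $k$ depending only on $\mathfrak{D}$. A sharper choice is $k^2=\sup_g\|dg_o\|^2_{ds^2}$, taken over the family of self-maps fixing $o$. This family is normal by Montel's theorem, so the supremum is finite and attained.

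\textbf{Part (2): rigidity.} Here I plan to use the Cartan-type uniqueness theorem together with the transformation law of the Bergman kernel. At $o$, consider the Bergman volume form $K(z,\bar z)\,dV$, which satisfies $K(g(z),\overline{g(z)})\,|\det g'(z)|^2\le K(z,\bar z)$ for every self-map $g$. This is the expected main step: one needs $|\det dg_o|\le1$ for every self-map fixing $o$, with equality forcing $g$ to be an automorphism (Cartan). This follows from the extremal characterization of the Bergman kernel. The maps $h=g$ and $\mathrm{id}$ compete, and iterating $g^n$ keeps $\det dg_o^n$ bounded; if $|\det dg_o|=1$, a subsequence of the iterates $g^{n_j}$ converges to a map with invertible differential, and Cartan's theorem then shows that $g$ is biholomorphic.

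It remains to interpret the hypothesis \eqref{rigidity}. I read it as equality of the metric volumes, i.e.\ equality of the determinants of $ds^2$ at $w$ and at $z$, since that is how Lu phrases the result. Then $|\det dg_o|=1$, and the previous step shows that $g$, and hence $f=\psi^{-1}\circ g\circ\phi^{-1}$, maps $\mathfrak{D}$ onto itself topologically.

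If the hypothesis is instead read as equality of $ds^2$ on all tangent directions, then $dg_o$ is unitary for $ds^2_o$. Again $|\det dg_o|=1$, and the same argument applies.

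The main obstacle I expect is the Cartan iteration argument in the presence of only a volume equality. This requires $\{g^n\}$ to be normal with a limit that is an automorphism. I would get this by passing to the linear limit of $dg_o^{\,n}$, which lies in a compact group since $|\det|=1$ and the norms are bounded, and then applying Cartan's uniqueness theorem.
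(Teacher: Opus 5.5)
The paper gives no proof of this statement; it is quoted from Lu \cite{Lu-a}. Your proposal therefore has to stand on its own, and in substance it does.

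Part (1) is correct. After normalizing to $g(o)=o$ by transitivity and invariance, the Cauchy estimates bound $\|dg_o\|$ uniformly, and $ds^2_o$ is a fixed positive definite form.

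Part (2) is correct under the reading that matters, namely equality of the Hermitian forms $f^*ds^2$ and $ds^2$ at $z$. A single-direction reading would make (2) false, e.g.\ $f(z)=(z^1,0,\dots,0)$ on the polydisk at $0$ in the direction $e_1$. Under the right reading, $dg_o$ is $ds^2_o$-unitary, so $|\det dg_o|=1$, and H.~Cartan's theorem gives $g\in\mathrm{Aut}(\mathfrak{D})$. Your volume reading is handled by the same step.

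Two corrections to the write-up.
\begin{enumerate}
\item The appeal to the extremal characterization of the Bergman kernel is wrong for non-injective $g$: $(u\circ g)\det g'$ need not have $L^2$-norm at most $\|u\|$. The inequality $|\det dg_o|\le 1$ comes only from the Cauchy estimates on the iterates $g^n$, which you also give, so drop the extremal remark.
\item The Cartan step should be made precise. Put $A=dg_o$. Boundedness of $\{A^n\}$ together with $|\det A|=1$ forces $A$ to be diagonalizable with unimodular eigenvalues, so $A^{n_j}\to I$ along a subsequence. A limit $h$ of $g^{n_j}$ maps into $\mathfrak{D}$, because a bounded homogeneous domain is complete hyperbolic, hence taut. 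It satisfies $h(o)=o$ and $dh_o=I$, so $h=\mathrm{id}$ by Cartan uniqueness, and this gives injectivity of $g$. A limit $\tilde h$ of $g^{n_j-1}$ satisfies $g\circ\tilde h=\mathrm{id}$, which gives surjectivity. The same tautness is what you need for your claim that the supremum defining the sharp $k$ is attained.
\end{enumerate}

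Compared with what the paper does for its own Finsler analogue, Theorem \ref{thm:schwarz}, your route is different and more general.
\begin{itemize}
\item For (1), the paper compares $F$ with the Carath\'eodory metric via curvature and applies the Ahlfors--Schwarz lemma on the discs $\zeta\mapsto\zeta\widetilde{w}$. This produces the sharp constant $\sqrt{K_1/K_2}$, but it relies on the convex circular Harish-Chandra realization and on curvature bounds. Your Cauchy-estimate argument yields only some $k$, which is all Lu's (1) claims, but it works for every bounded homogeneous domain, symmetric or not.
\item For rigidity, the paper uses Proposition \ref{prop:rigidity}, which needs the norm preserved by $dg_0$ to have $\mathfrak{D}$ as its unit ball. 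The Bergman norm at $o$ does not (its unit ball is an ellipsoid), so that argument would not give Lu's (2) directly. Your determinant argument needs only that $dg_o$ preserve some norm, so that $|\det dg_o|=1$; Cartan's theorem then does the rest.
\end{itemize}
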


The least positive constant \(k\), depending only on \(\mathfrak{D}\), for which \eqref{lc} holds is called the Schwarz constant of \(\mathfrak{D}\) and is denoted by \(k_0(\mathfrak{D})\).

In particular, it follows from Theorems 5, 6, 9, 11, 12, and 16 in Lu \cite{Lu-a} that

\begin{theorem}[\cite{Lu-a}]\label{Lu-a}
Let \(\mathfrak{D}\) be either the unit polydisk \(P_n\) or an irreducible bounded symmetric domain \(\mathfrak{R}_A\), where \(A=\mathrm{I},\mathrm{II},\mathrm{III},\mathrm{IV}\), and let \(ds_{\mathfrak{D}}^2\) be the Bergman metric on \(\mathfrak{D}\). Then for every holomorphic mapping \(f\) from \(\mathfrak{D}\) into itself,
\begin{equation}
f^\ast ds_{\mathfrak{D}}^2\leq k_0^2(\mathfrak{D})ds_{\mathfrak{D}}^2,\label{Lu}
\end{equation}
where
\begin{equation}
k_0(\mathfrak{D})=\sqrt{\operatorname{rank}(\mathfrak{D})}=
\begin{cases}
\sqrt{r}, & \mathfrak{D}=\mathfrak{R}_I(r,s),\\[2mm]
\sqrt{p}, & \mathfrak{D}=\mathfrak{R}_{II}(p),\\[2mm]
\sqrt{\left[\frac{q}{2}\right]}, & \mathfrak{D}=\mathfrak{R}_{III}(q),\\[2mm]
\sqrt{2}, & \mathfrak{D}=\mathfrak{R}_{IV}(n).
\end{cases}
\end{equation}
Hence the constant \(k_0(\mathfrak{D},ds_{\mathfrak{D}}^2)\) is an analytic invariant of \(\mathfrak{D}\).
\end{theorem}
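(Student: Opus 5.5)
The plan is to reduce everything to a linear-algebra inequality at the origin. We use homogeneity, the Cartan–Carathéodory Schwarz lemma for bounded circular convex domains, and a comparison between the Bergman metric at the origin and the Minkowski functional of \(\mathfrak{D}\).

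\emph{Step 1 (reduction to \(f(0)=0\)).} Each such \(\mathfrak{D}\) is homogeneous and contains the origin, and \(\operatorname{Aut}(\mathfrak{D})\) acts by isometries of \(ds^2_{\mathfrak{D}}\). Fix \(z\in\mathfrak{D}\) and choose \(\varphi,\psi\in\operatorname{Aut}(\mathfrak{D})\) with \(\varphi(0)=z\) and \(\psi(f(z))=0\). Replacing \(f\) by \(g=\psi\circ f\circ\varphi\), it suffices to prove \(g^*ds^2_{\mathfrak{D}}\le k_0^2\,ds^2_{\mathfrak{D}}\) at \(0\) when \(g(0)=0\).

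At the origin the Bergman metric is a positive multiple of a simple Hermitian form \(\|\cdot\|_B^2\), and the constant cancels on both sides. The forms are:
\begin{itemize}
\item \(|z|^2\) for \(P_n\) and for \(\mathfrak{R}_{IV}\);
\item \(\operatorname{tr}(dZ\,d\overline{Z}^{T})\) for \(\mathfrak{R}_I,\mathfrak{R}_{II},\mathfrak{R}_{III}\).
\end{itemize}

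\emph{Step 2 (linear Schwarz lemma).} All these domains are bounded, convex and circular, with Minkowski functional \(\rho\). This is the sup-norm for \(P_n\), the operator norm \(\|Z\|_{op}\) for types I–III, and \(\rho(z)^2=|z|^2+\sqrt{|z|^4-|z^Tz|^2}\) for type IV. The classical Schwarz lemma for such domains (take the slice functions \(\lambda\mapsto g(\lambda v)\) and apply the one-variable Schwarz lemma to supporting linear functionals) gives \(\rho(A v)\le\rho(v)\) for \(A=dg_0\).

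\emph{Step 3 (comparison of norms).} It remains to show
\[
\|v\|_B^2\le k_0^2\,\rho(v)^2\quad\text{and}\quad \rho(v)\le\|v\|_B .
\]
Then
\[
\|Av\|_B^2\le k_0^2\rho(Av)^2\le k_0^2\rho(v)^2\le k_0^2\|v\|_B^2 .
\]
The two inequalities are checked domain by domain.
\begin{itemize}
\item \(P_n\): clearly \(|z|^2\le n\max_i|z_i|^2\) and \(\max_i|z_i|\le|z|\).
\item Type I (\(r\times s\), \(r\le s\)): write \(\operatorname{tr}(ZZ^*)=\sum\sigma_i^2\) over at most \(r\) singular values. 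This gives \(\operatorname{tr}(ZZ^*)\le r\|Z\|_{op}^2\) and \(\|Z\|_{op}^2\le \operatorname{tr}(ZZ^*)\).
\item Type II: the same argument applies with \(p\) singular values.
\item Type III: this is the delicate case. A skew-symmetric matrix has singular values occurring in pairs, so \(\operatorname{tr}(ZZ^*)=2\sum_{i\le[q/2]}\sigma_i^2\). Hence \(\operatorname{tr}(ZZ^*)\le 2[q/2]\|Z\|_{op}^2\) and \(\|Z\|_{op}^2\le\tfrac12\operatorname{tr}(ZZ^*)\). Chaining these loses the factor \(2\) in one direction and gains it back in the other, so the result is \([q/2]\) rather than \(q\). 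Equivalently, use the normalization \(\tfrac12\operatorname{tr}\).
\item Type IV: from the formula, \(|z|^2\le\rho(z)^2\le 2|z|^2\), which gives the constant \(2\) after the appropriate normalization.
\end{itemize}
I expect the main obstacle to be bookkeeping: pairing the correct normalization of the Bergman form with \(\rho\) so that the two inequalities hold simultaneously, especially in the type III and type IV cases.

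\emph{Step 4 (sharpness).} Finally, \(k_0\) is the least constant. To see this, exhibit linear self-maps attaining equality. For type I, take \(Z\mapsto z_{11}E\), where \(E\) is the \(r\times s\) matrix with \(I_r\) in the first block. It maps \(\mathfrak{D}\) into itself since \(|z_{11}|\le\|Z\|_{op}\). At \(v=E_{11}\) it gives \(\|Av\|_B^2=r\|v\|_B^2\). The analogous maps work for the other types, for example \(z\mapsto (z_1,\dots,z_1)\) on \(P_n\). This also shows that \(k_0(\mathfrak{D})=\sqrt{\operatorname{rank}\mathfrak{D}}\) is a biholomorphic invariant.
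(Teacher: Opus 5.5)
Your proof is correct. The paper itself gives no proof of this statement: it is quoted from Lu. The paper only recovers it afterwards, in the remark following Theorem~\ref{thm-b}, as the Bergman-metric case of Theorems~\ref{thm:schwarz} and~\ref{thm:optimal-Lu}. That recovery also needs the fact, asserted there without proof, that the holomorphic sectional curvature of the Bergman metric satisfies $K_1/K_2=\operatorname{rank}(\mathfrak{D})$.

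Compared with that route, yours is more elementary.
\begin{itemize}
\item The paper gets the comparison $\frac{4}{K_1}F_C^2\le F^2\le \frac{4}{K_2}F_C^2$ from a curvature maximum principle (Corollary~\ref{cor-max}). It identifies $F_C(0;\cdot)$ with your Minkowski functional $\rho$ via Suzuki's theorem. It then derives the inequality from the Ahlfors--Schwarz lemma (Lemma~\ref{lem-DM}) along the disc $\zeta\mapsto\zeta\widetilde{w}$.
\item You instead compute the comparison constants directly from singular values. You then use the linear Schwarz lemma $\rho(dg_0 v)\le\rho(v)$, which is exactly the contraction property of the Carath\'eodory norm at $0$.
\end{itemize}
Your chain in fact proves more than is needed. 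Let $M$ and $m$ be the maximum and minimum of $F(0;v)^2/\rho(v)^2$. Then for any $\operatorname{Aut}(\mathfrak{D})$-invariant continuous Finsler metric $F$ the sharp constant is $\sqrt{M/m}$, with no K\"ahler--Berwald or curvature hypothesis. What the paper's machinery adds is the geometric identification $M/m=K_1/K_2$.

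One small point in Step 4. The ``analogous maps'' are fine for types II and III: use $Z\mapsto z_{11}I_p$ at $E_{11}$, and $Z\mapsto z_{12}\sum_{k=1}^{[q/2]}(E_{2k-1,2k}-E_{2k,2k-1})$ at $E_{12}-E_{21}$. In type IV, however, the extremal source direction is a null vector rather than a coordinate vector. Take $L(z)=(z_1-iz_2)e_1$ and $v=\frac12(e_1+ie_2)$. One checks that $|z_1-iz_2|\le\rho(z)$, $\rho(v)=\rho(e_1)=1$ and $|e_1|^2=2|v|^2$. Alternatively, use the Hahn--Banach construction from the proof of Theorem~\ref{thm:optimal-Lu}.
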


In this paper, we call \(k_0(\mathfrak{D})\) the Lu constant of \(\mathfrak{D}\) with respect to the Bergman metric \(ds^2\), and denote it by \(k_0(\mathfrak{D},ds^2)\).

A natural question is whether Theorem \ref{Lu-a} can be generalized to the case where \(\mathfrak{D}\) admits an \(\operatorname{Aut}(\mathfrak{D})\)-invariant strongly pseudoconvex complex Finsler metric that is not Hermitian quadratic. It is well known that most intrinsic metrics constructed in geometric function theory in several complex variables lack good smoothness, such as the Carath\'eodory and Kobayashi metrics on the unit polydisk \(P_n\) or on bounded symmetric domains of rank \(\ge 2\). Before 2023, the only known holomorphically invariant Hermitian metric was the Bergman metric, introduced by S. Bergman in 1922 and now bearing his name; it was also the first K\"ahler metric to be studied. The first example of a holomorphically invariant non-Hermitian strongly pseudoconvex complex Finsler metric was explicitly constructed on the unit polydisk \(P_n\subset\mathbb{C}^n\) \((n\ge 2)\) by Zhong \cite{Zhong-b}. In \cite{GZ}, Ge and Zhong explicitly constructed holomorphically invariant non-Hermitian strongly pseudoconvex complex Finsler metrics on irreducible bounded symmetric domains \(\mathfrak{D}\) of types I--IV with \(\operatorname{rank}(\mathfrak{D})\ge 2\).

In \cite{Zhong-c}, Zhong obtained the following theorem for classical domains (irreducible bounded symmetric domains of types I--IV).

\begin{theorem}[\cite{Zhong-c}]
Let \(\mathfrak{D}\) be a classical domain endowed with an \(\operatorname{Aut}(\mathfrak{D})\)-invariant K\"ahler--Berwald metric \(F\) whose holomorphic sectional curvature is bounded from below and from above by negative constants \(-K_1\) and \(-K_2\), respectively. Then for every holomorphic mapping \(f\) from \(\mathfrak{D}\) into itself, we have
\begin{equation}
(f^\ast F)(Z;V)\leq \sqrt{\frac{K_1}{K_2}}F(Z;V),\quad \forall\,(Z;V)\in T^{1,0}\mathfrak{D}.\label{sc}
\end{equation}
\end{theorem}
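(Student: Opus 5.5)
The plan is to prove a Finsler version of the Ahlfors--Schwarz--Yau lemma. It works because a Kähler--Berwald metric has holomorphic sectional curvature, along a complex line, equal to the Gaussian curvature of a suitable extremal holomorphic disc. The key observation is that for a Kähler--Finsler metric $F$ and a holomorphic disc $\varphi\colon\Delta\to\mathfrak{D}$ with $\varphi'(\zeta)=V$, the holomorphic sectional curvature $K_F(Z;V)$ is the supremum, over all such discs tangent to $V$ at $Z$, of the Gaussian curvature at $0$ of the pulled-back Hermitian metric $\varphi^\ast F^2$. This is the Abate--Patrizio characterization. For a Kähler--Berwald metric the Chern--Finsler connection is the Levi-Civita connection of a Kähler metric, so complex geodesics and totally geodesic discs are available. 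Given a holomorphic self-map $f$ of $\mathfrak{D}$ and a point $(Z;V)$, I would therefore reduce to a one-variable problem on a disc.

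\textbf{Step 1.} Since $F$ is $\operatorname{Aut}(\mathfrak{D})$-invariant and $\mathfrak{D}$ is homogeneous, $F$ is complete. The isotropy group also gives a uniform comparison of $F$ with the Bergman metric $ds^2$ on $T^{1,0}\mathfrak{D}$: the ratio $F/\sqrt{ds^2}$ is a continuous function on the compact unit sphere bundle over a single point, so it is bounded above and below by positive constants.

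\textbf{Step 2.} Fix $(Z;V)$ and choose a holomorphic disc $\varphi\colon\Delta\to\mathfrak{D}$ through $Z$ tangent to $V$ that is totally geodesic for $F$. In a bounded symmetric domain one can take a complex geodesic from the polydisc, or flat, decomposition. Along this disc $\varphi^\ast F^2$ is a Hermitian metric whose Gaussian curvature equals $K_F(\varphi(\zeta);\varphi'(\zeta))\geq -K_1$. By invariance and the symmetric-space structure, this curvature is in fact a constant at least $-K_1$.

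\textbf{Step 3.} Consider the composite $g=f\circ\varphi\colon\Delta\to\mathfrak{D}$ and the pulled-back pseudometric $g^\ast F^2$ on $\Delta$. By the Abate--Patrizio characterization, its Gaussian curvature is at most $K_F(g(\zeta);g'(\zeta))\leq -K_2$ wherever $g'\neq0$. The Ahlfors lemma for pseudometrics on the disc, with $\varphi^\ast F^2$ complete of curvature at least $-K_1$, then gives
\[
g^\ast F^2\leq \frac{K_1}{K_2}\,\varphi^\ast F^2 .
\]
Evaluating at $0$ yields $F(f(Z);f_\ast V)\leq\sqrt{K_1/K_2}\,F(Z;V)$, which is \eqref{sc}. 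Alternatively, one can apply Yau's maximum principle on $\mathfrak{D}$ itself. The argument uses the function $u=(f^\ast F)^2/F^2$ on the projectivized bundle together with the Bochner-type formula for Kähler--Berwald metrics; completeness from Step 1 makes Yau's principle, or an Omori--Yau type argument, applicable.

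\textbf{Main obstacle.} The hard part is Step 2, the curvature control along the source disc. Ahlfors' lemma needs a complete comparison metric on $\Delta$ with curvature bounded below by $-K_1$, and the sectional curvature bound only controls the \emph{supremum} over discs. The remedy is to use the Kähler--Berwald hypothesis, namely that the Chern--Finsler connection is linear and torsion-free. This makes the $F$-geodesic disc achieve the supremum, so its Gaussian curvature equals $K_F$ exactly, rather than merely being bounded above by it. If exhibiting such a disc explicitly proves awkward, I would fall back on the Yau maximum-principle route on the whole manifold. There the lower curvature bound enters through the Bochner formula for $\log u$, and the work shifts to checking that Berwald linearity kills the non-Hermitian torsion terms.
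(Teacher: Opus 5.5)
Your Step 2 fails, and it fails exactly when $\operatorname{rank}(\mathfrak{D})\ge 2$, which is the only case with content. (In rank one, $F$ is a multiple of the Bergman metric and $K_1=K_2$.)

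\textbf{The totally geodesic disc does not exist.} For a K\"ahler--Berwald $F$ on $\mathfrak{D}$, the Chern connection is the Levi-Civita connection of the invariant symmetric Riemannian metric. So a disc that is totally geodesic for $F$ is totally geodesic for the Bergman metric. A totally geodesic complex curve tangent to $V$ exists only if $\mathrm{span}_{\mathbb R}\{V,JV\}$ is a Lie triple system, and for generic $V$ it is not. Inside a maximal (totally geodesic) polydisc $\Delta^2\subset\mathfrak{D}$, take $V=(1,\tfrac12)$. Then $R(V,JV)V$ is proportional to $(i,\tfrac{i}{8})$, which is not in $\mathrm{span}_{\mathbb R}\{V,JV\}$.

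\textbf{Another disc does not rescue the claim.} The natural candidate is the linear disc $\zeta\mapsto\zeta V/F_C(0;V)$. Already for the product Bergman metric on $\Delta^2$ (each factor normalised to curvature $-4$) its curvature is not constant. For $V=(1,b)$ with $0<|b|<1$, it equals $-4(1+|b|^4)/(1+|b|^2)^2$ at the centre and tends to $-4$ at the boundary. For non-Hermitian $F$ you have no tool giving a lower bound for the curvature of $\varphi^*F^2$ along a whole complete disc. Restricting to a holomorphic curve only bounds its curvature from above by $K_F$. The Berwald property you invoke concerns attaining the supremum at the centre of a local disc. It does not produce the complete source metric with curvature $\ge -K_1$ that Yau's form of Ahlfors' lemma needs.

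\textbf{The fallback route has a different hole.} For a general $f$, nothing guarantees that $u=(f^*F)^2/F^2$ attains its supremum on $PT^{1,0}\mathfrak{D}$, and you supply no Finsler Omori--Yau principle. The Bochner and torsion terms are not the issue: at an actual maximum, the sup-definition of $K_F$ already gives $u\le K_1/K_2$.

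\textbf{The missing idea.} The lower bound $-K_1$ is needed only at one point, and it can be fed in through an \emph{invariant} ratio whose maximum is automatically attained. Had you compared $(f\circ\varphi)^*F$ with the standard Poincar\'e metric $\mathcal{P}$ of curvature $-K_1$ instead of with $\varphi^*F^2$, Step 3 would need only the one-point inequality $F(Z;V)\ge\mathcal{P}(0;1)$ for an extremal disc. That is, it would need $F\ge\frac{2}{\sqrt{K_1}}F_C$, since $F_C=F_K$ here.

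This is what the paper does:
\begin{enumerate}
\item[(1)] Theorem~\ref{thm:comparison} proves $F\ge\frac{2}{\sqrt{K_1}}F_C$ by applying Corollary~\ref{cor-max} to $F_C^2/F^2$. This ratio is $\operatorname{Aut}(\mathfrak{D})$-invariant, so it attains its maximum on the compact set $PT_0^{1,0}\mathfrak{D}$, and $K_{F_C}\le -4$.
\item[(2)] After moving $z_0$ and $f(z_0)$ to $0$ by automorphisms, it normalises $\tilde w$ so that $F(0;\tilde w)=2/\sqrt{K_1}$. Then $F_C(0;\tilde w)\le 1$.
\item[(3)] Since the Carath\'eodory indicatrix at $0$ is $\mathfrak{D}$ (Suzuki), the map $j_{\tilde w}(\zeta)=\zeta\tilde w$ sends the unit disc into $\mathfrak{D}$.
\item[(4)] Lemma~\ref{lem-DM} is applied to $g\circ j_{\tilde w}$ with $\mathcal{P}$ as the source metric. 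The equality $\mathcal{P}(0;1)=F(0;\tilde w)$ turns the result into $(g^*F)(0;\tilde w)\le\sqrt{K_1/K_2}\,F(0;\tilde w)$.
\end{enumerate}
No curvature information along the disc $j_{\tilde w}$ is ever required.
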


\textbf{Question 2.} Can one establish a Schwarz lemma on an arbitrary bounded symmetric domain \(\mathfrak{D}\) endowed with an arbitrary \(\operatorname{Aut}(\mathfrak{D})\)-invariant K\"ahler--Berwald metric?

The following theorem gives an affirmative answer.

\begin{theorem}\label{thm-b}
Let \(\mathfrak{D}\subset\mathbb{C}^n\) be a bounded symmetric domain in its Harish-Chandra realization, and let \(F:T^{1,0}\mathfrak{D}\to[0,+\infty)\) be an \(\operatorname{Aut}(\mathfrak{D})\)-invariant K\"ahler--Berwald metric whose holomorphic sectional curvature is bounded from below and from above by negative constants \(-K_1<0\) and \(-K_2<0\), respectively. Then
\begin{enumerate}
\item[(1)] for every holomorphic mapping \(f:\mathfrak{D}\to \mathfrak{D}\) and every \((z;v)\in T^{1,0} \mathfrak{D}\), we have
\begin{equation}
(f^*F)(z;v) \le \sqrt{\frac{K_1}{K_2}}\, F(z;v);
\label{eq-2}
\end{equation}
\item[(2)] the constant \(\sqrt{\frac{K_1}{K_2}}\) is optimal in the sense that there exist a point \(z_0\in\mathfrak{D}\), a nonzero vector \(v_0\in T_{z_0}^{1,0}\mathfrak{D}\), and a holomorphic self-map \(f_0:\mathfrak{D}\to\mathfrak{D}\) such that equality holds in \eqref{eq-2} with \(f=f_0\) at \((z_0;v_0)\);
\item[(3)] if, moreover, there exists a point \(z_0\in \mathfrak{D}\) such that
\begin{equation}
(f^*F)(z_0;v)=F(z_0;v) \quad \text{for all nonzero } v\in T_{z_0}^{1,0}\mathfrak{D},
\label{thm-s2}
\end{equation}
then \(f\in\operatorname{Aut}(\mathfrak{D})\).
\end{enumerate}
\end{theorem}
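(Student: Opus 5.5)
Part (1) is a Schwarz--Yau--Chen--Lu type comparison carried out for K\"ahler--Berwald metrics. A K\"ahler--Berwald metric has a Chern--Finsler connection that comes from a Hermitian-type linear connection on \(T^{1,0}\mathfrak{D}\), and its holomorphic sectional curvature in a direction \(v\) agrees with the Gaussian curvature of the pulled-back metric on holomorphic discs tangent to \(v\) at the base point. The plan is to apply the Ahlfors--Schwarz lemma along complex curves.

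Fix \((z;v)\) with \(v\neq 0\) and take the complex geodesic-like disc \(\varphi:\Delta\to\mathfrak{D}\) through \(z\) tangent to \(v\). The most natural choice is a disc lying in a maximal polydisc (totally geodesic flat) through \(z\), which exists by the polydisc theorem for \(\mathfrak{D}\). Pull back \(F\) by \(f\circ\varphi\) to obtain a pseudo-metric \(\lambda(\zeta)|d\zeta|\) on \(\Delta\). By the Gauss-equation comparison for Kähler--Finsler metrics (the curvature of a holomorphic curve is \(\le\) the holomorphic sectional curvature of the ambient metric, a result of Abate--Patrizio and Wong--Wu), its Gaussian curvature is \(\le -K_2\). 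The metric \(\varphi^*F\) on \(\Delta\) needs separate treatment: we do not merely bound its curvature below, but use the fact that it is, up to scale, a complete invariant metric. Here \(\mathrm{Aut}(\mathfrak{D})\)-invariance together with Theorem \ref{thm-a} (globally symmetric implies Kähler--Berwald) gives \(\varphi^*F\) constant curvature \(-c(v)\) with \(c(v)\le K_1\), provided the disc is totally geodesic. Ahlfors then yields
\[
f^*F(z;v)\le \sqrt{c(v)/K_2}\,F(z;v)\le\sqrt{K_1/K_2}\,F(z;v).
\]
\textbf{Main obstacle.} Justifying that the chosen disc is totally geodesic for a general invariant Berwald \(F\), and that the induced curvature equals the holomorphic sectional curvature in direction \(v\). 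I would handle this by showing the Berwald connection coincides with the canonical symmetric-space connection (invariance under the isotropy group and the symmetry \(\sigma_z\) forces the torsion-free invariant connection). This makes the orbit discs of \(SU(1,1)\)-subgroups totally geodesic, so that curvature along them is exactly \(K_F(v)\).

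For (2), choose \(v_1\) attaining the maximum curvature magnitude \(K_1\) and \(v_2\) attaining \(K_2\); these exist by invariance and compactness of the projectivized tangent space at the origin \(0\). In rank \(\ge 2\) the Cartan subspace allows both: \(v_1,v_2\) lie in a common maximal polydisc up to isotropy (\(K\)-action). Take \(f_0\) to be the holomorphic self-map which, inside the polydisc coordinates, sends the disc in direction \(v_2\) isometrically onto the disc in direction \(v_1\) (a coordinate-permutation/diagonal map \(\zeta\mapsto(\zeta,\zeta,\dots)\) composed with a projection, extended via a holomorphic retraction \(\mathfrak{D}\to\) polydisc such as a Harish-Chandra coordinate projection onto the flat). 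Since both discs carry constant curvature metrics \(-K_2\) and \(-K_1\), a direct computation gives ratio \(\sqrt{K_1/K_2}\). The delicate point is building the retraction, for which I would use the Jordan-triple Peirce projection, which maps \(\mathfrak{D}\) into the polydisc.

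For (3), compose with automorphisms so that \(z_0=0\) and \(f(0)=0\). Equality for all \(v\) means that \(df_0\) preserves the unit indicatrix of \(F\) at \(0\), a bounded convex circular set. Then \(df_0\) preserves its Minkowski functional and hence is invertible. By the Cartan uniqueness theorem applied to the iterates of \(f\), together with the boundedness of \(\mathfrak{D}\), \(df_0\) is unitary-like. Because \(\mathfrak{D}\) is circular, Cartan's linearity theorem then gives that \(f\) is linear and an automorphism.
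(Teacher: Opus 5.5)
The gap is in part (1), at the step you yourself flag as the main obstacle, and the fix you propose does not close it. Holomorphic curvature is a supremum over discs. So for a disc $\varphi$ tangent to $v$ you only know that the Gaussian curvature of $\varphi^*F^2$ at $0$ is $\le K_F(z;[v])$, and the lower bound $-K_1$ tells you nothing about it. Your Ahlfors step therefore needs, for every $(z;v)$, a totally geodesic holomorphic disc tangent to $v$, on which $\varphi^*F$ is a Poincar\'e-type metric of curvature $-c(v)\ge -K_1$.

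In rank $\ge 2$ such discs do not exist for generic $v$. After an isotropy rotation, $v=\sum_j a_je_j$ lies in a maximal flat. Then $\mathbb{C}v$ is a Lie triple system only if $\{v,v,v\}=\sum_j a_j^3e_j\in\mathbb{C}v$, i.e.\ only if all nonzero $|a_j|$ coincide; the $SU(1,1)$-orbit discs $\zeta\mapsto\zeta(e_{j_1}+\cdots+e_{j_k})$ are exactly these. Concretely, take the bidisc, whose Bergman connection is the Berwald connection of every invariant K\"ahler--Berwald $F$, and $v=(1,\tfrac12)$. A totally geodesic curve tangent to $v$ would contain $t\mapsto(\tanh t,\tanh\tfrac t2)$, hence be the graph $z_2=\tanh(\tfrac12\operatorname{artanh}z_1)$. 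But the geodesic $t\mapsto(i\tanh t,\,i\tanh\tfrac t2)$ in direction $iv$ leaves this graph at order $t^3$. Since $F(0;\cdot)$ is not linear, proving the estimate only in tripotent directions does not give it for all $v$.

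What is missing is a lower bound $F(0;v)\ge\frac{2}{\sqrt{K_1}}F_C(0;v)$ valid in every direction.

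\textbf{How the paper gets it.} Theorem \ref{thm:comparison} proves this bound by a maximum principle on the compact set $PT_0^{1,0}\mathfrak{D}$, comparing $F$ with the Carath\'eodory (equivalently Kobayashi) metric, whose holomorphic curvature is $-4$. The $F_C$-indicatrix at $0$ is $\mathfrak{D}$ itself, so the linear disc $\zeta\mapsto\zeta w$ with $F_C(0;w)\le 1$ maps into $\mathfrak D$. Ahlfors--Schwarz applied to $g\circ j_w$ then uses only the upper bound $-K_2$, and no disc needs to be geodesic.

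\textbf{A repair inside your framework.} On the flat, $F(0;\sum a_je_j)$ is an absolute symmetric norm of $(a_j)$, hence monotone. Therefore $F(0;v)\ge\max_j|a_j|\,F(0;e_1)=F_C(0;v)\cdot 2/\sqrt{-K_F(0;[e_1])}\ge\frac{2}{\sqrt{K_1}}F_C(0;v)$. This uses only the genuinely totally geodesic disc $\zeta e_1$; you then finish with the linear disc as above.

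\textbf{Part (2).} It has the same dependence. Your discs in the extremal directions carry constant curvature $-K_1$ and $-K_2$ only if those extrema are attained in tripotent directions, which you do not show. This is the content of the sharpness statement \eqref{thm-c3} on which the paper's proof of (2) relies. Also, as written $f_0$ goes the wrong way: mapping the $v_2$-disc onto the $v_1$-disc gives ratio $\sqrt{K_2/K_1}$. You need $f_0(\zeta\hat v_1)=\zeta\hat v_2$, with $F_C(0;\hat v_i)=1$, evaluated at $\hat v_1$. The paper needs no retraction onto a polydisc. It uses the rank-one map $z\mapsto\ell(z)v_{\max}$, where $\ell$ is a Hahn--Banach norming functional for $v_{\min}$.

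\textbf{Part (3).} This is fine in outline. Its cleanest form: $df_0$ preserves the bounded $F$-indicatrix, so $|\det df_0|=1$, and H.~Cartan's theorem gives $f\in\operatorname{Aut}(\mathfrak D)$. This is more careful than applying Proposition \ref{prop:rigidity} directly, since that proposition assumes the preserved norm has unit ball $\mathfrak D$, which $F(0;\cdot)$ generally does not.
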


\begin{remark}
 If \(\mathfrak{D}\) is the unit polydisk or one of the irreducible bounded symmetric domains of types I--IV, and \(F\) is the Bergman metric on \(\mathfrak{D}\), then
\[
\sqrt{\frac{K_1}{K_2}}=\sqrt{\operatorname{rank}(\mathfrak{D})},
\]
and Theorem \ref{thm-b} is due to Look \cite{Lu-a}. If \(\mathfrak{D}\) is one of the irreducible bounded symmetric domains of types I--IV and \(F\) is an arbitrary \(\operatorname{Aut}(\mathfrak{D})\)-invariant K\"ahler--Berwald metric (which need not be the Bergman metric on \(\mathfrak{D}\) whenever \(\operatorname{rank}(\mathfrak{D})\ge 2\)), then the assertions (1) and (2) of Theorem \ref{thm-b} are due to Zhong \cite{Zhong-c}. If \(\mathfrak{D}=P_n\) is the unit polydisk in \(\mathbb{C}^n\) endowed with an arbitrary \(\operatorname{Aut}(P_n)\)-invariant K\"ahler--Berwald metric \(F\), then the assertions (1) and (2) of Theorem \ref{thm-b} are due to Lin and Zhong \cite{LZ}. If \(\mathfrak{D}\) is a polyball endowed with an arbitrary holomorphic invariant K\"ahler--Berwald metric \(F\), then the assertions (1) and (2) of Theorem \ref{thm-b} are due to Lin, Wang, and Zhong \cite{LWZ}.
\end{remark}

\begin{remark}
The novelty of Theorem \ref{thm-b} lies in the following three respects: (i) \(\mathfrak{D}\) is an arbitrary bounded symmetric domain; (ii) the Lu constant \(k_0(\mathfrak{D},F)\) is optimal; and (iii) the rigidity property of the Bergman metric in \eqref{rigidity} is also shared by every non-Hermitian \(\operatorname{Aut}(\mathfrak{D})\)-invariant K\"ahler--Berwald metric \(F\) whose holomorphic curvature is bounded from below and from above by the negative constants \(-K_1<0\) and \(-K_2<0\), respectively.
\end{remark}

\section{Preliminaries}

\begin{definition}[\cite{AP}]
An upper semicontinuous complex Finsler metric $F$ on a complex manifold $M$ is an upper semicontinuous function $F:T^{1,0}M\rightarrow[0,+\infty)$ satisfying
\begin{enumerate}
\item[(i)] $F(p;v)>0$ for all $p\in M$ and $v\in T_p^{1,0}M$ with $v\neq 0$;
\item[(ii)] $F(p;\lambda v)=\vert\lambda\vert F(p;v)$ for all $p\in M, v\in T_p^{1,0}M$ and $\lambda \in\mathbb{C}$.
\end{enumerate}
\end{definition}

In the following, we denote by $\mathbb{D}=\{z\in\mathbb{C}\vert \vert z\vert<1\}$ the open unit disk in $\mathbb{C}$.
\begin{definition}[\cite{AP}]
A pseudohermitian metric $\mu_g$ of scale $g$ on $\mathbb{D}$ is the upper semicontinuous pseudometric on $T^{1,0}\mathbb{D}$ defined by
\begin{equation}
\mu_g=gd\zeta\otimes d\overline{\zeta},\label{ph}
\end{equation}
where $g:\mathbb{D}\rightarrow[0,+\infty)$ is a non-negative upper semicontinuous function such that the set $S_g=g^{-1}(0)$ is a discrete subset of $\mathbb{D}$.
\end{definition}

\begin{proposition}\label{prop-iph}
Let $F:T^{1,0}M\rightarrow[0,+\infty)$ be an upper semicontinuous complex Finsler metric on a complex manifold of dimension $n$. Then for each non-constant holomorphic map $\varphi:\mathbb{D}\rightarrow M$,
\begin{equation}
\mu_g=gd\zeta\otimes d\overline{\zeta}\quad \mbox{with}\quad g:=\varphi^\ast F^2\label{iph}
\end{equation}
 is a pseudohermitian metric on $\mathbb{D}$.
\end{proposition}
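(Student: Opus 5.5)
We have to check the two requirements in the definition of a pseudohermitian metric for $g:=\varphi^\ast F^2$, that is,
\[
g(\zeta)=F^2\bigl(\varphi(\zeta);\varphi'(\zeta)\bigr),\qquad \varphi'(\zeta):=d\varphi_\zeta\Bigl(\frac{\partial}{\partial\zeta}\Bigr)\in T^{1,0}_{\varphi(\zeta)}M .
\]
First, $g$ must be a non-negative upper semicontinuous function on $\mathbb{D}$. Second, its zero set $S_g=g^{-1}(0)$ must be discrete in $\mathbb{D}$. Once these hold, $\mu_g=g\,d\zeta\otimes d\overline{\zeta}$ is exactly the pullback $\varphi^\ast F^2$, using the homogeneity $F(p;\lambda v)=|\lambda|F(p;v)$ to write $\varphi^\ast F^2(\zeta;\lambda)=|\lambda|^2 g(\zeta)$. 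The plan is therefore a direct verification in these two steps.

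For upper semicontinuity, the map $\Phi:\mathbb{D}\to T^{1,0}M$, $\zeta\mapsto(\varphi(\zeta);\varphi'(\zeta))$, is continuous (indeed holomorphic), since $\varphi$ is holomorphic and its derivative depends holomorphically on $\zeta$ in local coordinates. The function $F^2=t^2\circ F$ is upper semicontinuous on $T^{1,0}M$, because $F\ge0$ is upper semicontinuous and $t\mapsto t^2$ is continuous and nondecreasing on $[0,\infty)$. The composition $g=F^2\circ\Phi$ of an upper semicontinuous function with a continuous map is upper semicontinuous. Non-negativity is immediate from $F\ge 0$.

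For discreteness of the zero set, condition (i) of the definition ($F(p;v)>0$ for $v\ne0$) together with $F(p;0)=0$ (from (ii) with $\lambda=0$) gives
\[
g(\zeta)=0\iff \varphi'(\zeta)=0 .
\]
So $S_g$ is the set of critical points of $\varphi$. We show it is discrete as follows. Suppose $S_g$ had an accumulation point $\zeta_0\in\mathbb{D}$. Choose a holomorphic chart $w=(w^1,\dots,w^n)$ around $\varphi(\zeta_0)$. Near $\zeta_0$, each component $(w^j\circ\varphi)'$ is a holomorphic function of one variable vanishing on a set accumulating at $\zeta_0$, so by the identity theorem it vanishes identically near $\zeta_0$. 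Hence the set $\{\zeta:\varphi'(\zeta)=0 \text{ on a neighborhood of }\zeta\}$ is nonempty and open. It is also closed in $\mathbb{D}$: at any limit point of this set, the same chart argument shows the derivative vanishes on a neighborhood. Since $\mathbb{D}$ is connected, $\varphi'\equiv0$ on $\mathbb{D}$, and so $\varphi$ is constant, a contradiction. Thus $S_g$ is discrete.

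The only mildly delicate point is the connectedness/identity-theorem argument across charts in the last step; handling it via this open-closed argument on $\mathbb{D}$, as above, makes it routine. The upper semicontinuity step is formal.
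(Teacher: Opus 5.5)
Your proposal is correct and follows the same route as the paper: you show that $g(\zeta)=F^2(\varphi(\zeta);\varphi'(\zeta))$ is nonnegative and upper semicontinuous, then use $g(\zeta)=0\iff\varphi'(\zeta)=0$ and the identity theorem for the components of $\varphi'$ to get that $S_g$ is discrete. Your chart-by-chart open--closed argument also supplies details the paper's proof glosses over: the paper writes $\varphi=(\varphi_1,\dots,\varphi_n)$ as if in global coordinates, and it never says that non-constancy of $\varphi$ is what prevents all the $\varphi_j'$ from vanishing identically.
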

\begin{proof}
It suffices to show that $g:=\varphi^\ast F^2$ is a non-negative upper semicontinuous function on $\mathbb{D}$ with $S_g:=g^{-1}(0)$ being discrete.
Denote $\varphi=(\varphi_1,\cdots,\varphi_n)$. Then
$$\varphi_\ast\left(\lambda\frac{\partial}{\partial \zeta}\right)=\lambda\sum_{j=1}^n\varphi_j'(\zeta)\frac{\partial}{\partial z^j}\in T_{\varphi(\zeta)}^{1,0}M,\quad\forall\lambda\in\mathbb{C}.$$
 By assumption, $F$ is an upper semicontinuous complex Finsler metric on $M$,  $\varphi$ and $\varphi'$ are holomorphic maps, it follows that
$$g(\zeta):=(\varphi^\ast F^2)(\zeta)=F^2(\varphi(\zeta);\varphi'(\zeta))$$
 is  a non-negative upper semicontinuous function on $\mathbb{D}$.
 Since $g(\zeta)=0$ if and only if $\varphi_\ast(\frac{\partial}{\partial \zeta})=0$, which is equivalent to $\varphi_1'(\zeta)=\cdots=\varphi_n'(\zeta)=0$. The set
$$S_g=g^{-1}(0)={g_1}^{-1}(0)\cap \cdots\cap{g_n}^{-1}(0)$$  is a discrete subset of $\mathbb{D}$, where $g_j:=\varphi_j'$ for $j=1,\cdots,n$. This completes the proof.
\end{proof}
\begin{remark}
If $F:T^{1,0}M\rightarrow[0,+\infty)$  is  a strongly pseudoconvex complex Finsler metric and  $\varphi:\mathbb{D}\rightarrow M$ is a holomorphic embedding,  then $\mu_g$ defined by \eqref{iph} is  clearly a Hermitian metric on $\mathbb{D}$, namely $\varphi^\ast F^2$ is a positive smooth function on $\mathbb{D}$.
\end{remark}

\begin{definition}[\cite{AP}]
The (lower) generalized Laplacian of an upper semicontinuous function $u$ (defined in a neighborhood of  $\zeta\in\mathbb{D}$) is defined by
\begin{equation}
\triangle u(\zeta)=4\liminf_{r\rightarrow 0}\,\frac{1}{r^2}\left\{\frac{1}{2\pi}\int_0^{2\pi}[u(\zeta+re^{i\theta})-u(\zeta)]d\theta\right\}.\label{GL}
\end{equation}
\end{definition}
\begin{remark}\label{maxpoint}
If $u$ is $C^2$ in a neighborhood of the point $\zeta$, then \eqref{GL} actually reduces to $\triangle u(\zeta)=4\frac{\partial^2u}{\partial\zeta\partial\overline{\zeta}}$. Second, $\triangle u\geq 0$ if and only if $u$ is subharmonic. Finally, if $\zeta_0$ is a point of local maximum for $u$, then $\triangle u(\zeta_0)\leq 0$.
\end{remark}
\begin{definition}[\cite{AP}]\label{def-GC}
Let $\mu_g=g(\zeta)d\zeta\otimes d\overline{\zeta}$ be a pseudohermitian metric on $\mathbb{D}$. Then the Gaussian curvature $K(\mu_g)$ of $\mu_g$  is the function
\begin{equation}
K(\mu_g)(\zeta)=-\frac{1}{2g(\zeta)}\triangle \log g(\zeta) \label{GC}
\end{equation}
defined on $\mathbb{D}\setminus S_g$, where $\triangle$ is the (lower) generalized Laplacian given by \eqref{GL}. Clearly, if $\mu_g$ is a standard Hermitian metric, $K(\mu_g)$ reduces to the usual Gaussian curvature with $\triangle=4\frac{\partial^2}{\partial\zeta\partial\overline{\zeta}}$.
\end{definition}

\begin{definition}[\cite{AP}]\label{def-HC}
Let $F:T^{1,0}M\rightarrow[0,+\infty)$ be an upper semicontinuous complex Finsler metric on a complex manifold $M$. Take $p\in M$ and $v\in T_p^{1,0}M$ with $v\neq 0$.
The holomorphic curvature $K_F$ of $F$ at $p$ in the direction $v$ is given by
\begin{equation}
K_F(p;v)=\sup_{\varphi}\left\{K(\varphi^\ast F^2)(0)\right\},  \label{HCD}
\end{equation}
where the supremum is taken with respect to the family of all holomorphic maps $\varphi:\mathbb{D}\rightarrow M$ with
$\varphi(0)=p$ and $\varphi'(0)=\lambda v$ for some $\lambda\in\mathbb{C}^\ast:=\mathbb{C}\setminus\{0\}$,  $K(\varphi^\ast F^2)(0)$ is the Gaussian curvature of the pseudohermitian metric $\varphi^\ast F^2$ at the point $0\in \mathbb{D}$.
\end{definition}

\begin{remark}
$K_F(p;v)$ depends only on the complex line in $T_p^{1,0}M$ spanned by $v$, namely $K_F(p;v)=K_F(p;[v])$ for any $(p;[v])\in PT^{1,0}M$.
If $F$ is a strongly pseudoconvex complex Finsler metric on $M$, then $K_F$ is a smooth function on $PT^{1,0}M$; if furthermore $M$ is a homogeneous complex manifold
and $F$ is an $\mbox{Aut}(M)$-invairant strongly pseudoconvex complex Finsler metric on $M$, then $F$ has bounded holomorphic sectional curvature,
namely $K_F$ is bounded below and above by two constants, respectively.
\end{remark}
Note that any holomorphic map $\varphi$ satisfying Definition \ref{def-HC} is a  local embedding in a neighborhood of the origin, and by \eqref{HCD},
\begin{equation}
K(\varphi^\ast F^2)(0)\leq K_F(p;v).\label{IGC}
\end{equation}

\begin{proposition}\label{prop-hg}
Suppose $f:\mathbb{D}\to[0,+\infty)$ is an upper semicontinuous function and $g:\mathbb{D}\to(0,+\infty)$ is continuous. Define
\[
h(z):=\log\frac{f(z)}{g(z)},\qquad z\in \mathbb{D},
\]
with the convention $\log 0=-\infty$. Then $h$ is an upper semicontinuous function on $\mathbb{D}$.
\end{proposition}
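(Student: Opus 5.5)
The plan is to reduce the claim to the elementary characterization of upper semicontinuity. A function $h:\mathbb{D}\to[-\infty,+\infty)$ is upper semicontinuous exactly when, for every real $c$, the set $\{z\in\mathbb{D}: h(z)<c\}$ is open. Equivalently, for every $z_0$,
\[
\limsup_{z\to z_0}h(z)\le h(z_0).
\]
I will verify the second form at an arbitrary $z_0\in\mathbb{D}$, splitting into two cases: $f(z_0)>0$ and $f(z_0)=0$.

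\emph{Case $f(z_0)>0$.} Since $g$ is continuous and positive, $1/g$ is continuous near $z_0$. Take a sequence $z_k\to z_0$ with $h(z_k)\to\limsup_{z\to z_0}h(z)$. Upper semicontinuity of $f$ gives $\limsup_k f(z_k)\le f(z_0)$. Then
\[
\limsup_k \frac{f(z_k)}{g(z_k)}\le \frac{f(z_0)}{g(z_0)},
\]
because $g(z_k)\to g(z_0)>0$ and the product of a nonnegative usc quantity with a convergent positive one behaves well. Finally, $\log$ is continuous and increasing on $[0,+\infty)$, with the convention $\log 0=-\infty$, so we obtain $\limsup_k h(z_k)\le h(z_0)$.

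\emph{Case $f(z_0)=0$.} Here $h(z_0)=-\infty$, so we must show $h(z)\to-\infty$. Given $\varepsilon>0$, upper semicontinuity of $f$ gives a neighborhood $U$ of $z_0$ on which $f<\varepsilon$. Continuity of $g$ lets us shrink $U$ so that $g>g(z_0)/2$ on $U$. Then on $U$,
\[
h<\log\frac{2\varepsilon}{g(z_0)},
\]
and this bound tends to $-\infty$ as $\varepsilon\to0$.

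A cleaner uniform route is to observe that $f/g$ is nonnegative and upper semicontinuous, by the estimate
\[
\frac{f}{g}\le \frac{f(z_0)+\varepsilon}{g(z_0)-\delta}
\]
near $z_0$. Composing with the nondecreasing, upper semicontinuous extended map $\log:[0,\infty)\to[-\infty,\infty)$ then preserves upper semicontinuity, since
\[
\{\log u<c\}=\{u<e^c\}
\]
is open whenever $u$ is usc.

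No step is a real obstacle. The only care needed is the extended-value convention at zeros of $f$, which the set-level formulation $\{h<c\}=\{f<e^c g\}$ handles directly. That set is open because $e^c g-f$ is lower semicontinuous: it is continuous minus usc.
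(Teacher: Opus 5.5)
Your proof is correct and follows essentially the same route as the paper: show that $f/g$ is upper semicontinuous (a nonnegative usc function times the continuous positive function $1/g$), then pass through the monotone, extended-continuous $\log$, treating the zero case separately to get $h\to-\infty$. Your sublevel-set identity $\{h<c\}=\{f<e^{c}g\}$, which is open because $e^{c}g-f$ is lower semicontinuous, is a tidy variant that makes the case split unnecessary.
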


\begin{proof}
Since $g>0$ and continuous, $1/g$ is continuous. The product $\psi(z):=f(z)/g(z)=f(z)\cdot (1/g(z))$ is upper semicontinuous.
Fix $z_0\in \mathbb{D}$. If $\psi(z_0)>0$, then by upper semicontinuity of $\psi$,
\[
\limsup_{z\to z_0} \psi(z)\le \psi(z_0).
\]
Since $\log x$ is continuous and strictly increasing on $(0,\infty)$, we get
\[
\limsup_{z\to z_0} \log \psi(z)
= \log\left(\limsup_{z\to z_0} \psi(z)\right)
\le \log \psi(z_0).
\]
If $\psi(z_0)=0$, upper semicontinuity gives $\displaystyle\limsup_{z\to z_0}\psi(z)\le 0$. As $\psi\ge 0$, this implies $\psi(z)\to 0$ as $z\to z_0$, hence $\log \psi(z)\to -\infty$.
Thus
\[
\limsup_{z\to z_0} \log \psi(z) = -\infty = \log \psi(z_0).
\]
In both cases, $\displaystyle\limsup_{z\to z_0} h(z)\le h(z_0)$. Therefore $h$ is upper semicontinuous.
\end{proof}

\begin{remark}
The positivity assumption on $g$ is essential. Indeed, take $f(z)\equiv 1$ and $g(z)=\vert z\vert$. Then $f$ is continuous (hence upper semicontinuous), $g$ is continuous and nonnegative, but
\[
\log\frac{f(z)}{g(z)}=-\log \vert z\vert,\qquad z\neq 0,
\]
which tends to $+\infty$ as $z\to 0$. If we define any finite value at $z=0$, the upper limit at $0$ is $+\infty$, exceeding that value; hence the function is not upper semicontinuous on $D$.
\end{remark}

The following proposition was proved in Zhong \cite{Zhong-c}, where it is tacitly assumed that the holomorphic curvatures of \(F_1\) and \(F_2\) are bounded. Since we will continue to use this proposition in the present paper, we add this assumption and give a detailed proof to correct it as follows.

\begin{proposition}[\cite{Zhong-c}]\label{prop-1}
Let $M$ be a complex manifold of complex dimension $n$. Suppose that $F_1$ is an upper semicontinuous complex Finsler metric and $F_2$  a continuous complex Finsler metric
 on $M$, both of them have bounded holomorphic curvatures. Then for any $(p;[v])\in PT^{1,0}M$, one has
\begin{equation}
\left(K_{F_2}-\frac{F_1^2}{F_2^2}K_{F_1}\right)(p;[v])\leq \sup_{\varphi\in\mathcal{F}_v}\left\{\frac{1}{2(\varphi^\ast F_2^2)(0)}\triangle\left(\log \frac{\varphi^\ast F_1^2}{\varphi^\ast F_2^2}\right)(0)\right\},\label{cin}
\end{equation}
where the supremum is taken with respect to the family of all holomorphic mappings
$$\mathcal{F}_v:=\{\varphi:\mathbb{D}\rightarrow M\; \mbox{is holomorphic}:\varphi(0)=p,\varphi'(0)=\lambda v\;\mbox{for some}\;\lambda\in\mathbb{C}^\ast\}. $$
\end{proposition}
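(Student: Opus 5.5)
Fix $(p;[v])\in PT^{1,0}M$ and work one disk at a time. For each $\varphi\in\mathcal{F}_v$ I will prove the pointwise inequality
\begin{equation*}
\frac{1}{2g_2(0)}\triangle\log\frac{g_1}{g_2}(0)\;\ge\; K(\mu_{g_2})(0)-\frac{g_1(0)}{g_2(0)}K(\mu_{g_1})(0),
\qquad g_i:=\varphi^\ast F_i^2 ,
\end{equation*}
and then take suprema over $\varphi$. The inequality comes from splitting the logarithm. Since $\varphi'(0)=\lambda v\neq0$, $\varphi$ is an immersion near $0$, so $g_1(0),g_2(0)>0$. By Proposition~\ref{prop-hg}, $\log(g_1/g_2)$ is upper semicontinuous, so its generalized Laplacian \eqref{GL} makes sense. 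I then write
\[
\log\frac{g_1}{g_2}=\log g_1-\log g_2 .
\]
Because \eqref{GL} is a $\liminf$ of averages, it is superadditive, $\triangle(u+w)\ge\triangle u+\triangle w$. When one summand has a genuine limit (for instance when $w$ is $C^2$), this becomes an equality.

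The main obstacle is $\triangle(-\log g_2)$. Here $g_2$ is only continuous, so $\triangle\log g_2$ is again a $\liminf$, and $-\liminf=\limsup$; superadditivity then points the wrong way. I handle this by bounding the difference quotient averages of $\log g_1$ and $\log g_2$ separately and using the definition \eqref{GC} at the level of these averages:
\[
\triangle\log g_i(0)=-2g_i(0)K(\mu_{g_i})(0).
\]
The bounded-curvature hypothesis guarantees that both Laplacians are finite, so no $\infty-\infty$ ambiguity arises. If this step fails in general, I would first try to reduce to the case where $F_2$ is $C^2$ along $\varphi$ (or replace $\varphi$ by a nearby disk), so that $\triangle\log g_2$ is an honest limit; then the $\liminf$ of the difference equals $\triangle\log g_1-\triangle\log g_2$ exactly.

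Granting this, the difference of the two Laplacians, divided by $2g_2(0)$, gives
\[
-\frac{g_1(0)}{g_2(0)}K(\varphi^\ast F_1^2)(0)+K(\varphi^\ast F_2^2)(0).
\]
Using $g_1(0)/g_2(0)=F_1^2(p;v)/F_2^2(p;v)$ (homogeneity, since $\varphi'(0)=\lambda v$) and \eqref{IGC} for $F_1$, the first term is at least $-\frac{F_1^2}{F_2^2}K_{F_1}(p;[v])$. So for every $\varphi$,
\[
\frac{1}{2g_2(0)}\triangle\log\frac{g_1}{g_2}(0)\;\ge\; K(\varphi^\ast F_2^2)(0)-\frac{F_1^2}{F_2^2}K_{F_1}(p;[v]).
\]

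Finally I take the supremum over $\varphi\in\mathcal{F}_v$ on both sides. The right-hand supremum is $K_{F_2}(p;[v])-\frac{F_1^2}{F_2^2}K_{F_1}(p;[v])$ by Definition~\ref{def-HC}, which yields \eqref{cin}. The finiteness of $K_{F_1}$ and $K_{F_2}$ is exactly what makes this subtraction legitimate. This is the role of the added bounded-curvature hypothesis.
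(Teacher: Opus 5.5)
Your architecture is the same as the paper's. You prove a per-disk comparison, use $-cK(\varphi^\ast F_1^2)(0)\ge -cK_{F_1}(p;[v])$, and then take the supremum over $\varphi$; the paper packages the last two steps as $\sup A-c\sup B\le\sup(A-cB)$.

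The genuine gap is the step you explicitly grant. With $g_i=\varphi^\ast F_i^2$, your per-disk inequality is equivalent to
\[
\triangle\left(\log g_1-\log g_2\right)(0)\ \ge\ \triangle\log g_1(0)-\triangle\log g_2(0).
\]
For the lower ($\liminf$) Laplacian one only has $\triangle(\log g_1-\log g_2)(0)\ge \triangle\log g_1(0)-\overline{\triangle}\log g_2(0)$, where $\overline{\triangle}$ denotes the same expression with $\limsup$ in place of $\liminf$.

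Bounded curvature only makes these quantities finite. It says nothing about whether the circle-average quotients of $\log g_2$ converge, and that is the whole issue. Replacing $\varphi$ by a nearby disk does not help, because the irregularity lies in $F_2$ at $p$ itself and every admissible disk sees it. The paper's proof does not supply the missing idea either: it simply writes this step as an equality.

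In fact the step fails, and so does the proposition when $F_2$ is merely continuous. Take $M=\mathbb{D}$, $p=0$, and $F_1(z;v)=2|v|/(1-|z|^2)$, which has curvature $\equiv-1$. Let $F_2(z;v)=e^{w(z)/2}|v|$, where $w$ is continuous, compactly supported in $\mathbb{D}$, smooth off $0$, and equal to $|z|^2\sin\log\log(1/|z|)$ near $0$.

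Off $0$ we have $\triangle w=4\sin\log\log(1/|z|)+O\bigl(1/\log(1/|z|)\bigr)$, so $K_{F_2}$ is bounded there. Now take any $\varphi$ with $\varphi(0)=0$ and $\varphi'(0)=a\neq0$. The harmonic term $\log|\varphi'|^2$ contributes nothing to the quotients in the definition of the generalized Laplacian. Since $|\nabla w(z)|=O(|z|)$, the quotients for $w\circ\varphi$ equal $4|a|^2\sin\log\log\bigl(1/(|a|r)\bigr)+O(r)$. Hence $K(g_2)(0)=2$ for every $\varphi$, so $K_{F_2}(0;[1])=2$.

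Because $F_1^2/F_2^2=4$ at $(0;1)$, the left-hand side of the proposition is $2-4\cdot(-1)=6$. On the other hand, $\triangle\log\bigl(F_1^2\circ\varphi\bigr)(0)=8|a|^2$, so for every $\varphi$
\[
\frac{1}{2g_2(0)}\,\triangle\log\frac{g_1}{g_2}(0)=\frac{8|a|^2-4|a|^2}{2|a|^2}=2 .
\]
The claimed inequality would read $6\le 2$, which is false.

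So an extra hypothesis is genuinely needed, for instance $F_2\in C^2$. Then $\triangle\log g_2(0)$ is an honest limit, your superadditivity remark becomes additivity, and your argument (equivalently the paper's) goes through.
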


\begin{proof}
By assumption and Proposition \ref{prop-iph},  $(\varphi^\ast F_i^2)(\zeta)d\zeta\otimes d\overline{\zeta}$ are pseudohermitian metrics on $\mathbb{D}$ for $i=1,2$.
Since $\varphi'(0)\neq 0$, it follows that in a neighborhood of the origin $0\in\mathbb{D}$, $\varphi^\ast F_1^2$ is positive and upper semicontinuous while
 $\varphi^\ast F_2^2$ is positive and continuous. Hence $\log \varphi^\ast F_1^2$ is upper semicontinuous and $\log \varphi^\ast F_2^2$ is continuous in a neighborhood of the origin.
So that $\triangle \log \frac{\varphi^\ast F_1^2}{\varphi^\ast F_2^2}$ makes sense for the (lower) generalized Laplacian $\triangle$ in a neighborhood of the origin $0\in\mathbb{D}$.
Thus by Definition \ref{def-GC} and \ref{def-HC}, and Proposition \ref{prop-hg}, it follows that
 at the point $(p;[v])$ we have
\begin{eqnarray*}
\left(K_{F_2}-\frac{F_1^2}{F_2^2}K_{F_1}\right)
&=&\sup_{\varphi\in\mathcal{F}_v}\left\{K(\varphi^\ast F_2^2)(0)\right\}-\frac{F_1^2}{F_2^2}(p;[v])\sup_{\varphi\in\mathcal{F}_v}\left\{K(\varphi^\ast F_1^2)(0)\right\}.
\end{eqnarray*}
Since for each $\varphi\in\mathcal{F}_v$,
$$\frac{(\varphi^\ast F_1^2)(0)}{(\varphi^\ast F_2^2)(0)}=\frac{F_1^2}{F_2^2}(p;[v]):=c,$$
is a positive number which is independent of  $\varphi$,
and by assumption, $K_{F_1}$ and $K_{F_2}$ are bounded, thus by the inequality $\sup A-c\sup B\leq \sup(A-cB)$,  at $(p;[v])$ we have
\begin{eqnarray*}
\left(K_{F_2}-\frac{F_1^2}{F_2^2}K_{F_1}\right)
&\leq&\sup_{\varphi\in\mathcal{F}_v}\left\{K(\varphi^\ast F_2^2)(0)-\frac{(\varphi^\ast F_1^2)(0)}{(\varphi^\ast F_2^2)(0)}K(\varphi^\ast F_1^2)(0)\right\}\\
&=&\sup_{\varphi\in\mathcal{F}_v}\left\{\frac{1}{2(\varphi^\ast F_2^2)(0)}\triangle \left(\log\frac{\varphi^\ast F_1^2}{\varphi^\ast F_2^2}\right)(0)\right\},
\end{eqnarray*}
\end{proof}

\begin{corollary}\label{cor-max}
Let $M$ be a complex manifold of complex dimension $n$,  $F_1$  and $F_2$ are  continuous complex Finsler metrics on $M$. Suppose that there exists a point $(p_0;[v_0])\in PT^{1,0}M$ such that  $\frac{F_1^2}{F_2^2}$ attains its  maximum at the point $(p_0;[v_0])$.
 \begin{enumerate}
 \item[(i)] If the holomorphic curvature of $F_1$ is bounded above by a negative constant $-K_1<0$ and the holomorphic curvature of  $F_2$ is bounded below by a negative constant $-K_2<0$, then
$$
\frac{F_1^2(p;v)}{F_2^2(p;v)}\leq \frac{K_2}{K_1},\quad \forall (p;[v])\in PT^{1,0}M.
$$
\item[(ii)] If the holomorphic curvature of $F_1$ is bounded above by a positive constant $K_1$ and the holomorphic curvature of  $F_2$ is bounded below by a positive constant $K_2$, then
    $$
    \frac{F_1^2(p_0;v_0)}{F_2^2(p_0;v_0)}\geq \frac{K_2}{K_1}.
    $$
\end{enumerate}

\end{corollary}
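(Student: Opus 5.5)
The plan is to apply Proposition \ref{prop-1} at the maximum point \((p_0;[v_0])\) and then use the sub-mean-value characterization of a local maximum from Remark \ref{maxpoint}. The continuity of \(F_1,F_2\) ensures that both metrics fall under Proposition \ref{prop-1}. The curvature bounds assumed in (i) and (ii) give the needed one-sided control; I will handle any failure of two-sided boundedness of the holomorphic curvature by working only with the particular \(\varphi\) used below.

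\textbf{Key step.} Fix any \(\varphi\in\mathcal{F}_{v_0}\). On a neighborhood of \(0\in\mathbb{D}\), set
\[
u(\zeta)=\log\frac{(\varphi^\ast F_1^2)(\zeta)}{(\varphi^\ast F_2^2)(\zeta)}=\log\frac{F_1^2}{F_2^2}\bigl(\varphi(\zeta);[\varphi'(\zeta)]\bigr).
\]
Since \(F_1^2/F_2^2\) attains its global maximum on \(PT^{1,0}M\) at \((p_0;[v_0])=(\varphi(0);[\varphi'(0)])\), the function \(u\) has a local maximum at \(0\). By Remark \ref{maxpoint}, \(\triangle u(0)\le 0\). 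Because this holds for every \(\varphi\in\mathcal{F}_{v_0}\), the right-hand side of \eqref{cin} is \(\le 0\). Proposition \ref{prop-1} then yields, at \((p_0;[v_0])\),
\[
K_{F_2}\le \frac{F_1^2}{F_2^2}K_{F_1}.
\]

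\textbf{Case (i).} Here \(K_{F_1}\le -K_1\) and \(K_{F_2}\ge -K_2\). Substituting, and writing \(c=F_1^2/F_2^2(p_0;v_0)>0\), gives \(-K_2\le K_{F_2}\le cK_{F_1}\le -cK_1\). Hence \(c\le K_2/K_1\). Since \(c\) is the maximum of \(F_1^2/F_2^2\), the inequality holds at every \((p;[v])\).

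\textbf{Case (ii).} Here \(K_{F_1}\le K_1\) and \(K_{F_2}\ge K_2>0\). We get \(K_2\le K_{F_2}\le cK_{F_1}\le cK_1\), so \(c\ge K_2/K_1\), which is exactly the claimed inequality at the maximum point.

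\textbf{Main obstacle.} The delicate point is applying Proposition \ref{prop-1}, which assumes bounded holomorphic curvature and uses \(\sup A-c\sup B\le\sup(A-cB)\). If bounded curvature is not available, I will bypass the proposition. For a single \(\varphi\), the Laplacian of the difference of logs is computed directly: the \(C^0\) logs combine via the \(\liminf\) definition \eqref{GL} as \(\triangle(a-b)\ge \triangle a-\overline{\triangle}b\). Using \(K(\varphi^\ast F_1^2)(0)\le K_{F_1}\) and choosing \(\varphi\) nearly extremal for \(K_{F_2}\), one then gets \(K_{F_2}-\epsilon\le cK_{F_1}\) for every \(\epsilon>0\). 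This recovers the inequality without the boundedness hypothesis.
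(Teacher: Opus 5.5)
Your main line is the paper's proof: apply Proposition~\ref{prop-1} at $(p_0;[v_0])$, use Remark~\ref{maxpoint} to make the right-hand side of \eqref{cin} nonpositive, deduce $K_{F_2}\le cK_{F_1}$ with $c=\frac{F_1^2}{F_2^2}(p_0;v_0)$, and then substitute the curvature bounds in (i) and (ii). You are also right that the corollary only supplies one-sided curvature bounds. So Proposition~\ref{prop-1} is being invoked outside its stated hypotheses; the paper does this without comment.

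However, your fallback does not close as written. Put $g_i:=\varphi^\ast F_i^2$. From $\triangle u(0)\le 0$ and $\triangle(a-b)\ge\triangle a-\overline{\triangle}b$ you only get $\triangle\log g_1(0)\le\overline{\triangle}\log g_2(0)$, hence $cK(g_1)(0)\ge -\overline{\triangle}\log g_2(0)/(2g_2(0))$. The curvature in Definition~\ref{def-GC} is built from the lower ($\liminf$) Laplacian, so this right-hand side is $\le K(g_2)(0)$. That is the wrong direction, and choosing $\varphi$ nearly extremal for $K_{F_2}$ cannot repair it. The same non-additivity of the lower Laplacian is also glossed over in the final equality of the proof of Proposition~\ref{prop-1}, on which your main line relies.

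The fix is to use the full strength of the maximum, not just $\triangle u(0)\le 0$. For every small $r>0$ the circle mean of $u-u(0)$ is $\le 0$. Hence, for each such $r$ separately, the difference quotient in \eqref{GL} for $\log g_1$ is at most the one for $\log g_2$. Taking $\liminf$ gives $\triangle\log g_1(0)\le\triangle\log g_2(0)$, i.e.
\[
K(g_2)(0)\le \frac{g_1(0)}{g_2(0)}\,K(g_1)(0)=cK(g_1)(0)\le cK_{F_1}(p_0;[v_0])
\]
for each single $\varphi\in\mathcal{F}_{v_0}$. Taking the supremum over $\varphi$ yields $K_{F_2}(p_0;[v_0])\le cK_{F_1}(p_0;[v_0])$ directly. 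This needs no boundedness hypothesis, no $\epsilon$, and no appeal to Proposition~\ref{prop-1}, and your case analysis for (i) and (ii) then goes through unchanged.
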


\begin{proof}
By assumption, $\varphi^\ast \left(\frac{F_1^2}{F_2^2}\right)(\zeta)$ achieves its  maximum at the point $\zeta=0$, where $\varphi:\mathbb{D}\rightarrow M$ is any holomorphic map satisfying $\varphi(0)=p_0$ and $\varphi'(0)=\lambda v_0$ for some $\lambda\in\mathbb{C}^\ast$.
Thus by Remark \ref{maxpoint}, $\triangle\left(\log \frac{\varphi^\ast F_1^2}{\varphi^\ast F_2^2}\right)(0)\leq 0$, so that
$$
\sup_{\varphi\in\mathcal{F}_v}\left\{\frac{1}{2(\varphi^\ast F_2^2)(0)}\triangle\left(\log \frac{\varphi^\ast F_1^2}{\varphi^\ast F_2^2}\right)(0)\right\}\leq 0,
$$
since $(\varphi^\ast F_2^2)(0)>0$. By Proposition \ref{prop-1}, we have
\begin{equation}
K_{F_2}(p_0;[v_0])\leq \frac{F_1^2(p_0;v_0)}{F_2^2(p_0;v_0)}K_{F_1}(p_0;[v_0]).\label{p0v0}
\end{equation}
By assumption (i) and \eqref{p0v0}, we have
$$
-K_2\leq K_{F_2}(p_0;[v_0])\leq \frac{F_1^2(p_0;v_0)}{F_2^2(p_0;v_0)}K_{F_1}(p_0;[v_0])\leq \frac{F_1^2(p_0;v_0)}{F_2^2(p_0;v_0)}(-K_1),
$$
from which it follows that
$$\frac{F_1^2(p;v)}{F_2^2(p;v)}\leq \frac{F_1^2(p_0;v_0)}{F_2^2(p_0;v_0)}\leq \frac{K_2}{K_1},\quad \forall (p;[v])\in PT^{1,0}M$$
since $\frac{F_1^2}{F_2^2}$ achieves its maximum at the point $(p_0;v_0)$.
By assumption (ii) and \eqref{p0v0}, we immediately have
$$
\frac{F_1^2(p_0;v_0)}{F_2^2(p_0;v_0)}\geq \frac{K_2}{K_1}.
$$
\end{proof}

\begin{lemma}\label{lem-DM}
Let $\mathbb{D}$ be the open unit disk in $\mathbb{C}$ which is endowed with an $\mbox{Aut}(\mathbb{D})$-invariant K\"ahler metric $\mathcal{P}(z;v)=\frac{2}{\sqrt{K_1}}\frac{\vert v\vert}{1-\vert z\vert^2}$ with constant Gaussian curvature $-K_1<0$.
Let $M$ be a complex manifold which is endowed with a strongly pseudoconvex complex Finsler metric $F$ such that the holomorphic sectional curvature $K_F$ of $F$ is bounded above by a constant $-K_2<0$, then every holomorphic map $f:\mathbb{D}\rightarrow M$ satisfies
\begin{equation}
(f^\ast F)(z;v)\leq \sqrt{\frac{K_1}{K_2}}\mathcal{P}(z;v),\quad \forall (z;v)\in T^{1,0}\mathbb{D}\cong \mathbb{D}\times\mathbb{C}.\label{DM}
\end{equation}
\end{lemma}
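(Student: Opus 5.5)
This is an Ahlfors--Schwarz type argument on the disk. I compare the pull-back $F_1:=f^\ast F$ with the Poincaré-type metric $F_2:=\mathcal{P}$ and use the curvature inequality of Proposition \ref{prop-1}. The complication is that $\mathbb{D}$ is not compact, so the ratio $F_1^2/F_2^2$ need not attain a maximum. I would therefore first shrink the disk, as in the classical Ahlfors lemma.

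\textbf{Setup.} Since $\dim_{\mathbb{C}}\mathbb{D}=1$, $PT^{1,0}\mathbb{D}\cong\mathbb{D}$. By homogeneity, $(f^\ast F)(z;v)=|v|\,F(f(z);f'(z))$, so the ratio
\[
u(z):=\frac{(f^\ast F)^2(z;v)}{\mathcal{P}^2(z;v)}
\]
depends only on $z$. If $f'\equiv 0$ there is nothing to prove, so assume $f$ is nonconstant. For $0<r<1$, set
\[
\mathcal{P}_r(z;v)=\frac{2}{\sqrt{K_1}}\frac{r|v|}{r^2-|z|^2}
\]
on $\mathbb{D}_r=\{|z|<r\}$. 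This metric has constant curvature $-K_1$ and tends to $+\infty$ at $\partial\mathbb{D}_r$. Since $f^\ast F$ is continuous on $\overline{\mathbb{D}_r}$ (because $F$ is smooth off the zero section and continuous), the ratio $u_r:=(f^\ast F)^2/\mathcal{P}_r^2$ is continuous on $\mathbb{D}_r$, vanishes on $\partial\mathbb{D}_r$, and hence attains a maximum at some $z_0\in\mathbb{D}_r$. If the maximum is $0$ we are done; otherwise $f'(z_0)\ne0$.

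\textbf{Curvature step.} Near $z_0$, $f$ is an immersion, so $f^\ast F^2$ is a smooth positive Hermitian metric there. The key input is
\[
K(f^\ast F^2)(z_0)\le K_F(f(z_0);f'(z_0))\le -K_2.
\]
This follows from Definition \ref{def-HC} and \eqref{IGC}: for any $\varphi\in\mathcal{F}_v$ at $z_0$, the map $f\circ\varphi$ is a competitor in the supremum defining $K_F$, and Gaussian curvature is invariant under the local biholomorphism $\varphi$. Hence $K_{f^\ast F}\le -K_2$ at $z_0$. Since $f^\ast F$ lives on a one-dimensional manifold, its holomorphic curvature is just the Gaussian curvature of the conformal metric, and similarly $K_{\mathcal{P}_r}\equiv -K_1$.

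Rather than invoking Corollary \ref{cor-max} globally (the boundedness hypotheses on $\mathbb{D}_r$ are not needed in dimension one), I would argue directly, or apply Proposition \ref{prop-1} with $\varphi$ the identity. At the interior maximum $z_0$ of $\log u_r$, Remark \ref{maxpoint} gives $\triangle\log u_r(z_0)\le0$. Writing $\log u_r=\log f^\ast F^2-\log\mathcal{P}_r^2$ and using \eqref{GC}, this becomes
\[
-2K_2\,f^\ast F^2(z_0)\ \ge\ -2K(f^\ast F^2)(z_0)\,f^\ast F^2(z_0)\ \ge\ 2K_1\,\mathcal{P}_r^2(z_0)\cdot(-1)\cdot(-1).
\]
Keeping signs carefully, this yields $K_2\,u_r(z_0)\le K_1$. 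Thus $u_r\le K_1/K_2$ on $\mathbb{D}_r$.

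\textbf{Conclusion and main obstacle.} Letting $r\to1$, we have $\mathcal{P}_r\to\mathcal{P}$ pointwise, which gives \eqref{DM}. The main obstacle is the curvature comparison $K(f^\ast F^2)\le K_F$. It must hold even though $f$ may have critical points: those points are isolated and the maximum point avoids them. It also requires the pull-back to be smooth enough near $z_0$ that the generalized Laplacian is the ordinary one; strong pseudoconvexity of $F$ together with $f'(z_0)\neq0$ secures this.
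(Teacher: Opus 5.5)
Your argument is correct, but it takes a different, self-contained route. The paper runs no maximum-principle argument at all. It observes that $\sqrt{K_2}F$ has holomorphic curvature $\le -1$. It then invokes the Ahlfors--Schwarz-type result cited as Theorem 6.3 of [K] to get $f^\ast(\sqrt{K_2}F)\le P$, where $P(z;v)=2|v|/(1-|z|^2)$ is the curvature $-1$ Poincar\'e metric. Finally it rewrites $P=\sqrt{K_1}\,\mathcal{P}$.

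You instead reprove that lemma directly. You exhaust $\mathbb{D}$ by the disks $\mathbb{D}_r$ carrying the complete metrics $\mathcal{P}_r$ of curvature $-K_1$, and take an interior maximum of $u_r$. There you use the curvature-decreasing property $K(f^\ast F^2)(z_0)\le K_F(f(z_0);[f'(z_0)])$ at a point with $f'(z_0)\neq 0$; this is just \eqref{IGC} applied to $\zeta\mapsto f(z_0+\epsilon\zeta)$. Then you let $r\to 1$.

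The paper's route is a two-line reduction, but it rests on the hypotheses and normalization of the cited theorem; the factor $2$ in $P$ matters under the curvature convention \eqref{GC}. Your route makes all of this explicit and needs no rescaling. It uses only the one-variable maximum principle, which is why you are right to bypass Corollary~\ref{cor-max} and its boundedness assumptions.

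One correction: your displayed chain of inequalities has both inequality signs reversed, and its first term should be $+2K_2 f^\ast F^2(z_0)$. From $\triangle\log u_r(z_0)\le 0$ and \eqref{GC}, the correct chain is
\[
2K_2\,f^\ast F^2(z_0)\le -2K(f^\ast F^2)(z_0)\,f^\ast F^2(z_0)=\triangle\log f^\ast F^2(z_0)\le \triangle\log\mathcal{P}_r^2(z_0)=2K_1\,\mathcal{P}_r^2(z_0).
\]
This yields your stated conclusion $K_2\,u_r(z_0)\le K_1$, so the rest of the proof goes through unchanged.
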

\begin{proof}
Let $P(z;v)=\frac{2\vert v\vert}{1-\vert z\vert^2}$ be the standard Poincar\'e metric with constant Gaussian curvature $-1$. By assumption, $\sqrt{K_2}F$ is a strongly pseudoconvex complex Finsler metric with holomorphic sectional curvature bounded above by $-1$. Thus by
Theorem 6.3 in \cite{K}, we have
\begin{eqnarray*}
f^\ast(\sqrt{K_2}F)(z;v)\leq \frac{2\vert v\vert}{1-\vert z\vert^2}=\sqrt{K_1}\cdot \frac{2}{\sqrt{K_1}}\cdot \frac{\vert v\vert}{1-\vert z\vert^2},
\end{eqnarray*}
from which we get
\begin{eqnarray*}
(f^\ast F)(z;v)\leq\sqrt{\frac{K_1}{K_2}}\mathcal{P}(z;v).
\end{eqnarray*}
This is exactly the inequality \eqref{DM}.
\end{proof}

Let $\mathfrak{D}$ be an arbitrary bounded symmetric domain. The following lemma is the key bridge between an $\mbox{Aut}(\mathfrak{D})$-invariant strongly pseudoconvex complex Finsler metric $F$ on $\mathfrak{D}$ and the Carath\'eodory metric $F_C$ (hence the Kobayashi metric $F_K$ since they coincide) on $\mathfrak{D}$.

\begin{theorem}\label{thm:comparison}
Let $\mathfrak{D}$ be a bounded symmetric domain in its Harish-Chandra realization and let $F:T^{1,0}\mathfrak{D}\to[0,+\infty)$ be an $\operatorname{Aut}(\mathfrak{D})$-invariant strongly pseudoconvex complex Finsler metric. Suppose
\begin{equation}
\min_{(z;[v])\in PT^{1,0}\mathfrak{D}}K_F(z;[v])=-K_1<0,\qquad
\max_{(z;[v])\in PT^{1,0}\mathfrak{D}}K_F(z;[v])=-K_2<0. \label{thm-c1}
\end{equation}
Then for every $(z;v)\in T^{1,0}\mathfrak{D}$,
\begin{equation}
\frac{4}{K_1}F_C^2(z;v) \le F^2(z;v) \le \frac{4}{K_2}F_C^2(z;v). \label{thm-c2}
\end{equation}
Moreover, the constants in \eqref{thm-c2} are sharp for each given $F$, namely
\begin{equation}
\frac{4}{K_1} = \min_{[v]\in PT_0^{1,0}\mathfrak{D}}\frac{F^2}{F_C^2}(0;[v]),\qquad
\frac{4}{K_2} = \max_{[v]\in PT_0^{1,0}\mathfrak{D}}\frac{F^2}{F_C^2}(0;[v]). \label{thm-c3}
\end{equation}
\end{theorem}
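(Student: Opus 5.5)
The plan is to prove the two inequalities in \eqref{thm-c2} separately, using different tools. First we reduce everything to the origin. Both $F$ and $F_C$ are $\operatorname{Aut}(\mathfrak{D})$-invariant, and $\operatorname{Aut}(\mathfrak{D})$ acts transitively on $\mathfrak{D}$. So it suffices to work at $z=0$ with $v\in T_0^{1,0}\mathfrak{D}$. There the ratio $F^2/F_C^2$ is a continuous function on the compact set $PT_0^{1,0}\mathfrak{D}$, so its maximum and minimum over $PT^{1,0}\mathfrak{D}$ are attained.

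We use two standard facts about the Harish-Chandra realization. First, $\mathfrak{D}$ is the unit ball of the spectral norm $\|\cdot\|$, so $F_C(0;v)=F_K(0;v)=\|v\|$ (with the normalization in which the Kobayashi metric of $\mathbb{D}$ is $|v|/(1-|z|^2)$, of curvature $-4$). Second, the linear disc $\varphi_v(\zeta)=\zeta v/\|v\|$ is an extremal disc for $F_K$ at $(0;v)$.

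\emph{Upper bound.} We apply Lemma \ref{lem-DM} with $K_1$ replaced by $4$, so that $\mathcal{P}(z;v)=|v|/(1-|z|^2)$ is the Kobayashi metric of $\mathbb{D}$. Taking $M=\mathfrak{D}$ and $f=\varphi_v$, we get $(\varphi_v^\ast F)(0;1)\le \frac{2}{\sqrt{K_2}}$. Since $\varphi_v'(0)=v/\|v\|$, this says $F(0;v)\le \frac{2}{\sqrt{K_2}}\|v\|=\frac{2}{\sqrt{K_2}}F_C(0;v)$. Invariance then gives $F^2\le \frac{4}{K_2}F_C^2$ everywhere.

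\emph{Lower bound.} We apply Corollary \ref{cor-max}(i) with $F_1=F_C$ and $F_2=F$. For this we need that $F_C$ is continuous on $\mathfrak{D}$, which holds since $F_C=F_K$ is invariant and equals the continuous norm $\|\cdot\|$ at $0$. We also need that its holomorphic curvature is $\le -4$, which is the classical curvature bound for the Carath\'eodory metric. The holomorphic curvature of $F$ is $\ge -K_1$ by \eqref{thm-c1}, and the maximum of $F_C^2/F^2$ is attained by the compactness remark above. The corollary then yields $F_C^2/F^2\le K_1/4$, i.e. $\frac{4}{K_1}F_C^2\le F^2$.

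\emph{Sharpness \eqref{thm-c3}.} This is the main obstacle, since the two bounds only give $4/K_1\le \min F^2/F_C^2$ and $\max F^2/F_C^2\le 4/K_2$. The plan is to prove the identity
\[
K_F(0;[v])=-4\,\frac{F_C^2}{F^2}(0;[v])\quad\text{for all } v\neq 0,
\]
after which taking the minimum and maximum of $K_F$ over $PT_0^{1,0}\mathfrak{D}$ (attained by invariance) gives both equalities in \eqref{thm-c3}. To get the identity, we first observe that the image of $\varphi_v$ is the orbit of a one-parameter subgroup of $\operatorname{Aut}(\mathfrak{D})$ generated by the tripotent direction of $v$ (via the spectral decomposition $v=\sum\lambda_je_j$ and the polydisc $\Delta^r$ it spans). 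Hence $\varphi_v^\ast F^2$ is an $\operatorname{Aut}(\mathbb{D})$-invariant metric $c\,|d\zeta|^2/(1-|\zeta|^2)^2$, with $c=F^2(0;v)/\|v\|^2$ and constant Gaussian curvature $-4/c$. This gives $K_F(0;[v])\ge -4F_C^2/F^2$. For the reverse inequality, we would show that $\varphi_v$ is totally geodesic for $F$, so that it realizes the supremum in Definition \ref{def-HC}. The first attempt would use the involutions $\sigma$ of $\mathfrak{D}$ fixing the disc pointwise (compositions of the symmetry at $0$ with the circle actions on the polydisc factors) together with a second-variation argument. This is the step where the argument could fail if $F$ is only strongly pseudoconvex, and it is where we would expect to use the K\"ahler--Berwald property supplied by Theorem \ref{thm-a}.
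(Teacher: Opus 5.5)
Your two inequalities in \eqref{thm-c2} are sound. The lower bound is exactly the paper's argument: Corollary~\ref{cor-max}(i) with $F_1=F_C$, $F_2=F$. For the upper bound the paper applies Corollary~\ref{cor-max}(i) again with $F$ and $F_C$ swapped, which tacitly uses $K_{F_C}\ge -4$ (true here since $F_C=F_K$ on convex domains). Your route applies Lemma~\ref{lem-DM} to the disc $\zeta\mapsto\zeta v/\|v\|$ instead. It needs only $K_F\le -K_2$ and $F_C(0;v)=\|v\|$, and is the same mechanism the paper uses later in Theorem~\ref{thm:schwarz}.

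The sharpness part has a genuine gap. You claim that the image of $\varphi_v$ is an orbit in $\operatorname{Aut}(\mathfrak{D})$ on which $F$ restricts to $c\,|d\zeta|^2/(1-|\zeta|^2)^2$. This holds only when $v$ is a multiple of a tripotent, i.e.\ when all nonzero $\lambda_j$ coincide. In general the transvections in direction $v$ move $0$ along $t\mapsto\sum_j\tanh(t\lambda_j)e_j$, which leaves the line $\mathbb{C}v$, and the identity $K_F(0;[v])=-4F_C^2/F^2(0;[v])$ is false. For a counterexample, take the bidisc with $F^2(z;v)=\sum_{i=1}^2|v^i|^2/(1-|z^i|^2)^2$ (the case $t=0$ of Example~\ref{ex}) and $v=(1,\tfrac12)$. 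Then $\varphi_v^*F^2=(1-|\zeta|^2)^{-2}+\tfrac14(1-\tfrac14|\zeta|^2)^{-2}$ and
\[
K_F(0;[v])=-4\,\frac{|v^1|^4+|v^2|^4}{(|v^1|^2+|v^2|^2)^2}=-\frac{68}{25}\neq-\frac{16}{5}=-4\,\frac{F_C^2}{F^2}(0;v).
\]
The linear disc does realize $K_F(0;[v])$ here, because the connection coefficients vanish at $0$. So what fails is the orbit claim, not the ``realizes the supremum'' step.

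Since \eqref{thm-c2} already gives $\min F^2/F_C^2\ge 4/K_1$ and $\max F^2/F_C^2\le 4/K_2$, what \eqref{thm-c3} actually needs is a bound in every direction $v\neq0$:
\[
K_F(0;[v])\ge -\frac{4}{\min(F^2/F_C^2)},\qquad K_F(0;[v])\le -\frac{4}{\max(F^2/F_C^2)}.
\]
The first follows from the pointwise inequality $K_F\ge -4F_C^2/F^2$. That inequality is true, but it needs its own argument, e.g.\ that $(1-|\zeta|^2)^2\varphi_v^*F^2$ is maximal at $\zeta=0$. The second is not a pointwise identity; equality is attained in the maximal tripotent direction. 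One way to prove it:
\begin{itemize}
\item Conjugate $v$ by the isotropy group into a maximal polydisc. There invariance gives $F^2(0;v)=\tilde G(x)$ with $x_\alpha=|v^\alpha|^2$.
\item Using the K\"ahler--Berwald property, the linear disc realizes the supremum, so $K_F(0;[v])=-4\sum_\alpha x_\alpha^2\,\partial_\alpha\tilde G/\tilde G^2$.
\item Convexity of $t\mapsto\log\tilde G(e^{t_1},\dots,e^{t_r})$ plus Jensen's inequality then gives $K_F(0;[v])\le -4/\tilde G(1,\dots,1)$.
\item By monotonicity of $\tilde G$, this bound equals $-4/\max(F^2/F_C^2)$.
\end{itemize}
The paper's own one-line justification of \eqref{thm-c3} only records that the extrema are attained. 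So you were right to single this out as the real obstacle, but your proposed route does not overcome it.
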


\begin{proof}
We prove the upper and lower bounds separately.

Since $\mathfrak{D}$ is a homogeneous domain, for any fixed $z_0\in \mathfrak{D}$ and any $v_0\in T_{z_0}^{1,0}\mathfrak{D}$, there exists a $\varphi_{z_0}\in\operatorname{Aut}(\mathfrak{D})$ such that $\varphi_{z_0}(z_0)=0$. Let $w:=d\varphi_{z_0}(v)\in T_0^{1,0}\mathfrak{D}$. By the $\operatorname{Aut}(\mathfrak{D})$-invariance of both $F$ and $F_C$,
\[
F(z_0;v)=F(0;w),\qquad F_C(z_0;v)=F_C(0;w).
\]
Thus it suffices to prove \eqref{thm-c2} for all nonzero vectors $w\in T_0^{1,0}\mathfrak{D}$.

Consider the ratio $\phi:=F_C^2/F^2$. Then $\phi$ attains its maximum at some point $(0;[v_1])\in PT_0^{1,0}\mathfrak{D}$ since $\mathfrak{D}$ is homogeneous, $\phi$ is well-defined and  continuous on $PT_0^{1,0}\mathfrak{D}$. By assertion (i) in Corollary \ref{cor-max}, we have
$$
\frac{F_C^2}{F^2}(0;[v])\leq\frac{-K_1}{-4}=\frac{K_1}{4},\quad\forall (0;[v])\in PT_0^{1,0}\mathfrak{D},
$$
from which it follow that $\frac{4}{K_1}F_C^2(0;v)\leq F^2(0;v)$ for any nonzero vectors $v\in T_0^{1,0}\mathfrak{D}$.
To prove the  second inequality $F^2(0;v)\leq \frac{4}{K_2}F_C^2(0;v)$ in \eqref{thm-c2}. It suffices to note that $1/\phi$ also attains its maximum at some point $(0;[v_2])\in PT_0^{1,0}\mathfrak{D}$ since $\mathfrak{D}$ is homogeneous, $1/\phi$ is well-defined and continuous on $PT_0^{1,0}\mathfrak{D}$. Thus by assertions (i) in Corollary \ref{cor-max}, we have
$$
\frac{F^2}{F_C^2}(0;[v])\leq \frac{-4}{-K_2}=\frac{4}{K_2},\quad \forall(0;[v])\in PT_0^{1,0}\mathfrak{D},
$$
from which it follows that $F^2(0;v)\leq \frac{4}{K_2}F_C^2(0;v)$ for any nonzero vectors $v\in T_0^{1,0}\mathfrak{D}$.

The equalities in \eqref{thm-c3} follows immediately since $\mathfrak{D}$ is homogeneous, both $\phi$ and $1/\phi$ are well-defined and positive continuous functions on $PT_0^{1,0}\mathfrak{D}$.

This completes the proof.
\end{proof}

\begin{proposition}\label{prop:rigidity}
Let $D\subset\mathbb C^n$ be a bounded convex circular domain with $0\in D$, and let $\pmb{f}:\mathbb{C}^n\rightarrow[0,+\infty)$ be a complex norm on $\mathbb C^n$ whose unit ball is $D$. Suppose $g:D\to D$ is a holomorphic map with $g(0)=0$ such that
\begin{equation}
\pmb{f}(g_*(v))=\pmb{f}(v),\qquad \forall v\in\mathbb{C}^n.\label{cmb}
\end{equation}
Then $g\in\operatorname{Aut}(D)$.
\end{proposition}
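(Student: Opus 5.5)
The plan is to reduce the statement to H.~Cartan's uniqueness theorem. Read \eqref{cmb} as a condition on the differential of $g$ at the fixed point $0$, that is, $g_*=dg_0$, identified with a complex linear map $A:\mathbb{C}^n\to\mathbb{C}^n$. If \eqref{cmb} is meant at every point, we simply specialize it to $0$. The idea is that $A$ is a linear isometry of the norm $\pmb{f}$, hence a linear automorphism of its unit ball $D$. Composing $g$ with $A^{-1}$ then gives a self-map of $D$ that fixes $0$ and has identity differential there, and Cartan's theorem forces this composition to be the identity.

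\textbf{Step 1: $A$ is a linear automorphism of $D$.}
\begin{itemize}
\item From \eqref{cmb}, $\pmb{f}(Av)=\pmb{f}(v)$ for all $v\in\mathbb{C}^n$.
\item Since $\pmb{f}$ is a norm, $Av=0$ forces $\pmb{f}(v)=0$, hence $v=0$. So $A$ is injective, and therefore invertible on $\mathbb{C}^n$.
\item Because $D=\{v:\pmb{f}(v)<1\}$, we get $A(D)\subset D$. Applying the same identity to $A^{-1}$, namely $\pmb{f}(A^{-1}w)=\pmb{f}(w)$, gives $A^{-1}(D)\subset D$.
\item Hence $A(D)=D$ and $A|_D\in\operatorname{Aut}(D)$.
\end{itemize}
This is the only place where convexity and circularity enter: together they guarantee that $D$ is exactly the unit ball of a genuine norm.

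\textbf{Step 2: reduce to $h=\mathrm{id}$.}
Set $h:=A^{-1}\circ g:D\to D$. This map is holomorphic, satisfies $h(0)=0$, and has $dh_0=A^{-1}A=I$. It therefore suffices to show $h=\mathrm{id}_D$, since then $g=A|_D\in\operatorname{Aut}(D)$.

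\textbf{Step 3: Cartan's uniqueness theorem.}
This is the main step. We can cite it directly, or prove it as follows.
\begin{itemize}
\item Suppose $h\neq\mathrm{id}$. Expand $h(z)=z+P_m(z)+O(|z|^{m+1})$, where $P_m\not\equiv0$ is the first nonvanishing homogeneous term of degree $m\ge2$.
\item By induction, the $k$-th iterate satisfies $h^k(z)=z+kP_m(z)+O(|z|^{m+1})$.
\item Each iterate $h^k$ maps $D$ into the bounded set $D$. Cauchy estimates on a small polydisc around $0$ therefore bound the degree-$m$ Taylor coefficients of $h^k$ uniformly in $k$.
\item But those coefficients are exactly $kP_m$, which is unbounded as $k\to\infty$ unless $P_m\equiv0$. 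This contradiction gives $h=\mathrm{id}$.
\end{itemize}

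The only delicate point is the induction for the iterates. It follows by substituting the expansion of $h$ into itself and observing that every cross term has degree at least $m+1$. Everything else is routine linear algebra.
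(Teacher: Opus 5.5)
Your proposal is correct and follows the paper's proof essentially step by step: your $A=dg_0$ (the paper's $L=g_*(0)$) is a norm isometry, hence an invertible linear map with $A(D)=D$, and Cartan's uniqueness theorem applied to $A^{-1}\circ g$ gives $g=A\in\operatorname{Aut}(D)$. The only difference is that you also sketch a proof of Cartan's theorem (iterates $h^k(z)=z+kP_m(z)+O(|z|^{m+1})$ combined with Cauchy estimates); this sketch is sound, whereas the paper simply cites the theorem.
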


\begin{proof}
Set \(L:=g_*(0)\).  By assumption, \(D\) is the unit ball of the complex norm \(\pmb f\), we have
\[
D=\{v\in\mathbb C^n:\pmb f(v)<1\}.
\]
From \eqref{cmb},
\[
\pmb f(Lv)=\pmb f(v),\qquad \forall v\in\mathbb C^n.
\]
Hence \(L\) preserves the norm \(\pmb f\). Therefore
\[
v\in D
\iff \pmb f(v)<1
\iff \pmb f(Lv)<1
\iff Lv\in D.
\]
Thus \(L(D)\subset D\).

Moreover, \(\pmb f(Lv)=\pmb f(v)\) implies that \(L\) is injective. Indeed, if \(Lv=0\), then
\[
\pmb f(v)=\pmb f(Lv)=\pmb f(0)=0,
\]
and since \(\pmb f\) is a norm, this gives \(v=0\). As \(L:\mathbb C^n\to\mathbb C^n\) is an injective linear map between finite-dimensional spaces, it is invertible; that is, \(L\in \mathrm{GL}(n;\mathbb C)\).

Now take any \(w\in D\). Put \(v:=L^{-1}w\). Then
\[
\pmb f(v)=\pmb f(Lv)=\pmb f(w)<1,
\]
so \(v\in D\), and \(w=Lv\in L(D)\). Hence \(D\subset L(D)\). Combining this with \(L(D)\subset D\), we obtain
\[
L(D)=D.
\]
Consequently, \(L\) is a linear automorphism of \(D\), i.e. \(L\in\operatorname{Aut}(D)\).

Define
\[
h:=L^{-1}\circ g:D\to D.
\]
Then \(h\) is holomorphic, \(h(0)=0\), and
\[
h_*(0)=L^{-1}\circ g_*(0)=\mathrm{Id}_{\mathbb C^n}.
\]
Since \(D\) is a bounded domain, Cartan's uniqueness theorem implies that
\[
h=\mathrm{Id}_D.
\]
Therefore \(g=L\) is complex linear. As \(L(D)=D\), we have \(g\in\operatorname{Aut}(D)\). This completes the proof.
\end{proof}

\section{Proof of Theorem \ref{thm-a}}

Let \(\tilde{M}\) be the complement of the zero section in \(TM\). Let \(\pi:\tilde{M}\to M\) be the projection, and let \(\pi^*TM\) be the pullback real tangent bundle over \(\tilde{M}\). Since \(F\) is strongly convex, it induces both a Riemannian metric \(\langle\cdot\,\vert\,\cdot\rangle\) along each fiber of \(\pi^*TM\). The Chern connection of \(F\) is a real linear connection
\[
\overset{\mathrm{Ch}}{\nabla}:\Gamma(\pi^*TM)\to\Gamma(T^*\tilde{M}\otimes\pi^*TM)
\]
that is torsion-free and real horizontally metrical with respect to $\langle\cdot\,\vert\,\cdot\rangle$(cf. Bao--Chern--Shen \cite{BCS}).

\begin{proof}[Proof of Theorem \ref{thm-a}]
By Theorem 5.8 in \cite{Deng}, \(F\) is necessarily a real Berwald metric on \(M\); hence its real Berwald connection \(\overset{\mathrm{B}}{\nabla}\) coincides with its Chern connection \(\overset{\mathrm{Ch}}{\nabla}\), and both coincide with the pullback of the Levi-Civita connection \(\overset{\mathrm{LC}}{\nabla}\) of some \(G\)-invariant Riemannian metric \(g\) on \(M\). Moreover, \((M,g)\) is a globally symmetric Riemannian space.

On the other hand, by Theorem 5.24 in \cite{Deng}, there also exists a \(G\)-invariant Hermitian metric \(Q\) on \(M\) such that \((M,J,Q)\) is a Hermitian symmetric space. Hence, by Proposition \ref{Helgason}, \(Q\) is necessarily a K\"ahler metric on \(M\).

Since all \(G\)-invariant Riemannian metrics on the symmetric homogeneous space \(G/H\) have the same Levi-Civita connection by Theorem 5.10 in \cite{Deng}, we have
\begin{equation}
\overset{\mathrm{B}}{\nabla}=\overset{\mathrm{Ch}}{\nabla}=\overset{\mathrm{LC}}{\nabla}=\overset{Q}{\nabla},\qquad \overset{Q}{\nabla}J=0,\label{connection}
\end{equation}
where \(\overset{Q}{\nabla}\) is the Levi-Civita connection of \(Q\). Hence, by \eqref{connection}, we have \(\overset{\mathrm{Ch}}{\nabla}\mathcal{J}=0\), where \(\mathcal{J}=J\circ\pi\) denotes the pullback of \(J\) to \(\pi^*TM\).

Since \(\overset{\mathrm{Ch}}{\nabla}\) coincides with the horizontal part of the Cartan connection \(\overset{\mathrm{Car}}{\nabla}\), and \(\overset{\mathrm{Ch}}{\nabla}\mathcal{J}=0\) is equivalent to \(\overset{\mathrm{Car}}{\nabla}_X\boldsymbol{J}=0\) for any \(X\in\mathcal{H}_{\mathbb{R}}\), where \(\boldsymbol{J}\) is the complex structure on \(T^{1,0}M\) induced by \(J\) \cite{XZ}. Thus, by Theorem 3.11 in \cite{XZ}, \(F\) is necessarily a K\"ahler--Berwald metric.
\end{proof}

\section{Proof of Theorem \ref{thm-b}}

In this section, we shall complete the proof of Theorem \ref{thm-b} by proving the following Theorem \ref{thm:schwarz} and Theorem \ref{thm:optimal-Lu}, respectively.

\begin{theorem}\label{thm:schwarz}
Let $\mathfrak{D}\subset\mathbb{C}^n$ be a bounded symmetric domain in its Harish-Chandra realization and let $F:T^{1,0}\mathfrak{D}\rightarrow[0,+\infty)$ be an $\mbox{Aut}(\mathfrak{D})$-invariant K\"ahler-Berwald metric such that its holomorphic sectional curvature is bounded below and above by negative constants $-K_1<0$ and $-K_2<0$, respectively. Then for every holomorphic mapping $f:\mathfrak{D}\to \mathfrak{D}$, we have
\begin{equation}
(f^*F)(z;v) \le \sqrt{\frac{K_1}{K_2}}\, F(z;v),\quad \forall(z;v)\in T^{1,0} \mathfrak{D}.
\label{thm-s1}
\end{equation}
Moreover, if there exist a point $z_0\in \mathfrak{D}$ and for all nonzero vectors $v\in T_{z_0}^{1,0}\mathfrak{D}$ such that
\begin{equation}
(f^*F)(z_0;v)=F(z_0;v), \label{thm-eq}
\end{equation}
then $f\in\operatorname{Aut}(\mathfrak{D})$.
\end{theorem}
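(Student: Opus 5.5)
The inequality \eqref{thm-s1} will be obtained by passing through the Carath\'eodory metric \(F_C\) and using the comparison in Theorem \ref{thm:comparison}. The decreasing property of \(F_C\) under holomorphic maps gives \((f^*F_C)(z;v)=F_C(f(z);f_*v)\le F_C(z;v)\) for every holomorphic \(f:\mathfrak{D}\to\mathfrak{D}\). The comparison then yields the chain
\[
(f^*F)^2(z;v)=F^2(f(z);f_*v)\le \frac{4}{K_2}F_C^2(f(z);f_*v)\le \frac{4}{K_2}F_C^2(z;v)\le \frac{4}{K_2}\cdot\frac{K_1}{4}F^2(z;v)=\frac{K_1}{K_2}F^2(z;v).
\]
Taking square roots gives \eqref{thm-s1}. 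One has to check that \(F\) satisfies the hypotheses of Theorem \ref{thm:comparison}. The minimum and maximum in \eqref{thm-c1} are attained because \(\mathfrak{D}\) is homogeneous, so \(K_F\) reduces to a continuous function on the compact set \(PT_0^{1,0}\mathfrak{D}\). The stated bounds can therefore be taken as the extremal values; if they are only bounds, the inequality still follows, since a weaker \(K_1/K_2\) is larger. Lemma \ref{lem-DM} could also be used as an alternative route, applied to \(f\circ\varphi\) for extremal discs \(\varphi\), but the chain above is the most direct.

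For the rigidity part, assume \((f^*F)(z_0;v)=F(z_0;v)\) for all \(v\). Composing with automorphisms \(\varphi_{z_0}\) and \(\varphi_{f(z_0)}\) that send \(z_0\) and \(f(z_0)\) to \(0\), and using the \(\operatorname{Aut}(\mathfrak{D})\)-invariance of \(F\), I reduce to the case \(z_0=0\) and \(f(0)=0\). Replacing \(f\) by \(g:=\varphi_{f(z_0)}\circ f\circ\varphi_{z_0}^{-1}\), the hypothesis becomes \(F(0;g_*v)=F(0;v)\) for all \(v\in\mathbb{C}^n\), where \(g_*=dg_0\) is linear.

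The goal is to verify the hypothesis \eqref{cmb} of Proposition \ref{prop:rigidity} with \(D=\mathfrak{D}\), which is a bounded convex circular domain in its Harish-Chandra realization. The norm \(\pmb f\) is the Minkowski functional of \(\mathfrak{D}\), which equals \(F_C(0;\cdot)=F_K(0;\cdot)\), and I must show \(\pmb f(g_*v)=\pmb f(v)\). Since \(g(0)=0\), Schwarz's lemma for the Carath\'eodory metric gives \(\pmb f(g_*v)\le \pmb f(v)\), so \(g_*\) maps the closed unit ball of \(\pmb f\) into itself. Meanwhile \(g_*\) preserves the strongly convex norm \(F(0;\cdot)\), so \(g_*\) is injective and lies in the compact group \(\mathrm{Iso}(F(0;\cdot))\). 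Hence some sequence of powers \(g_*^{k_j}\) converges to the identity. If \(\pmb f(g_*v)<\pmb f(v)\) for some \(v\), then monotonicity gives \(\pmb f(g_*^{k_j}v)\le \pmb f(g_*v)<\pmb f(v)\), which contradicts \(g_*^{k_j}v\to v\). Therefore \(\pmb f(g_*v)=\pmb f(v)\), Proposition \ref{prop:rigidity} gives \(g\in\operatorname{Aut}(\mathfrak{D})\), and so \(f\in\operatorname{Aut}(\mathfrak{D})\).

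The main obstacle is this transfer of isometry from \(F(0;\cdot)\) to the Carath\'eodory norm \(\pmb f\). The compactness and powers argument handles it, but it requires care that \(g_*\) is invertible, which follows from strong convexity of \(F(0;\cdot)\). Apart from that, the remaining steps are direct applications of Theorem \ref{thm:comparison} and Proposition \ref{prop:rigidity}.
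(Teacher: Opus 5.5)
Your proposal is correct. For the inequality it takes a genuinely different route from the paper; for rigidity it follows the paper's route but supplies a step the paper skips.

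\textbf{The inequality.} The paper uses only the lower half of \eqref{thm-c2}. It rescales $\tilde v$ to $\widetilde w$ with $F(0;\widetilde w)=2/\sqrt{K_1}$ and $F_C(0;\widetilde w)\le 1$. It then uses Suzuki's identification of the $F_C$-indicatrix at $0$ with $\mathfrak{D}$ to see that the linear disc $\zeta\mapsto\zeta\widetilde w$ lies in $\mathfrak{D}$. Finally it applies Lemma \ref{lem-DM} (the Ahlfors--Schwarz lemma, which uses the upper bound $-K_2$ directly) to $g\circ j_{\widetilde w}$. This is essentially the extremal-disc alternative you mention in passing.

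Your chain uses both halves of \eqref{thm-c2} together with the contraction property of $F_C$. It is shorter and avoids the disc construction and Lemma \ref{lem-DM}. The cost is that it relies on the upper comparison $F^2\le \frac{4}{K_2}F_C^2$, whose proof in Theorem \ref{thm:comparison} needs the lower curvature bound $-4$ and continuity of $F_C$. The paper's route instead obtains the needed upper estimate against $F_K=F_C$ from the Ahlfors--Schwarz lemma. Since Theorem \ref{thm:comparison} is available to cite, your chain is fully legitimate.

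\textbf{Rigidity.} The paper invokes Proposition \ref{prop:rigidity} directly from $F(0;g_*v)=F(0;v)$. That proposition, however, requires invariance of the norm whose unit ball is $\mathfrak{D}$, namely $F_C(0;\cdot)$. The unit ball of $F(0;\cdot)$ is in general not $\mathfrak{D}$; for the Bergman metric on $P_n$ it is a Euclidean ball. Your argument fills exactly this gap: $g_*$ lies in the compact isometry group of $F(0;\cdot)$, so some powers $g_*^{k_j}$ converge to the identity, and together with $F_C(0;g_*v)\le F_C(0;v)$ this forces equality. An equivalent shortcut is that a linear map preserving a norm has $|\det g_*|=1$, and H.~Cartan's theorem then gives $g\in\operatorname{Aut}(\mathfrak{D})$ immediately.

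One minor point: invertibility of $g_*$ needs only that $F(0;\cdot)$ vanishes only at $0$, not strong convexity.
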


\begin{proof}
It suffices to show that for any fixed point $z_0\in \mathfrak{D}$, the inequality \eqref{thm-s1} holds for all nonzero vectors $v\in T_{z_0}^{1,0}\mathfrak{D}$ and any holomorphic map $f:\mathfrak{D}\rightarrow \mathfrak{D}$.

First,  since $\mbox{Aut}(\mathfrak{D})$ acts transitively on $\mathfrak{D}$, one can choose $\varphi,\psi\in\operatorname{Aut}(\mathfrak{D})$ such that $\varphi(z_0)=0$ and $\psi(f(z_0))=0$. Let $\tilde{v}:=d\varphi(v)\in T_0^{1,0}\mathfrak{D}$ and define
\[
g := \psi \circ f \circ \varphi^{-1}.
\]
Then $g:\mathfrak{D}\to \mathfrak{D}$ is holomorphic and, by the $\operatorname{Aut}(\mathfrak{D})$-invariance of $F$, we have
\[
(f^*F)(z_0;v) = (g^*F)(0;\tilde{v}),\qquad F(z_0;v)=F(0;\tilde{v}).
\]
Thus it suffices to prove \eqref{thm-s1} at $z_0=0$ for holomorphic maps $g:\mathfrak{D}\to \mathfrak{D}$ satisfying $g(0)=0$ and for any nonzero vectors $\tilde{v}\in T_0^{1,0}\mathfrak{D}$.

Let's fix $0\neq \tilde{v}\in T_0^{1,0}\mathfrak{D}$. By Theorem \ref{thm:comparison}, we have
\begin{equation}
F(0;\tilde{v}) \ge \frac{2}{\sqrt{K_1}}F_C(0;\tilde{v}). \label{thm-s3}
\end{equation}
Define
\begin{equation}
\widetilde{w} := \frac{2}{\sqrt{K_1}F(0;\tilde{v})}\tilde{v} \in T_0^{1,0}\mathfrak{D}. \label{thm-s4}
\end{equation}
Then $F(0;\widetilde{w})=\frac{2}{\sqrt{K_1}}$. Thus by \eqref{thm-s3}, $F_C(0;\widetilde{w})\le 1$.

By Theorem B in Suzuki \cite{Suzuki} for bounded convex complete circular domains in $\mathbb{C}^n$ with center $0$, the unit ball of the Carath\'eodory metric $F_C$ at the tangent space $T_0^{1,0}\mathfrak{D}$ satisfies
\[
I_0(F_C):=\left\{w\in T_0^{1,0}\mathfrak{D} : F_C(0;w)<1\right\} =\mathfrak{D}.
\]
 Hence $F_C(0;\widetilde{w})\le 1$ implies that $\widetilde{w}\in \overline{\mathfrak{D}}$, the closure of $\mathfrak{D}$ (identifying $T_0^{1,0}\mathfrak{D}$ with $\mathbb{C}^n$). Thus we can define the following holomorphic embedding:
\begin{equation}
j_{\widetilde{w}}:\mathbb{\mathfrak{D}}\to \mathfrak{D},\qquad j_{\widetilde{w}}(\zeta):=\zeta \widetilde{w}. \label{thm-s5}
\end{equation}
It's clear that $j_{\widetilde{w}}(0)=0$ and $j_{\widetilde{w}}'(0)=\widetilde{w}$.

Let $\mathcal P(z;v)$ be the $\mbox{Aut}(\mathbb{\mathfrak{D}})$-invariant K\"ahler metric with constant holomorphic sectional curvature $-K_1<0$ (see Lemma \ref{lem-DM}). Then
\begin{equation}
\mathcal P\left(\zeta;1\right):=\frac{2}{\sqrt{K_1}}\frac{1}{1-\vert\zeta\vert^2}. \label{thm-s6}
\end{equation}
By construction,
\begin{equation}
(j_{\widetilde{w}}^*F)\left(0;1\right)=F(0;\widetilde{w})=\frac{2}{\sqrt{K_1}}
=\mathcal P\left(0;1\right). \label{thm-s7}
\end{equation}

Now apply Lemma \ref{lem-DM}  to the holomorphic map $h:=g\circ j_{\widetilde{w}}:\mathbb{\mathfrak{D}}\to \mathfrak{D}$. We obtain
\begin{equation}
(h^*F)\left(0;1\right)
\le \sqrt{\frac{K_1}{K_2}}\,
\mathcal P\left(0;1\right). \label{thm-s8}
\end{equation}
Using \eqref{thm-s7}, this gives
\begin{equation}
(g^*F)(0;\widetilde{w})\le \sqrt{\frac{K_1}{K_2}}F(0;\widetilde{w})=\sqrt{\frac{K_1}{K_2}}\frac{2}{\sqrt{K_1}}=\frac{2}{\sqrt{K_2}}. \label{thm-s9}
\end{equation}

On the other hand, $\tilde{v}=\frac{\sqrt{K_1}F(0;\tilde{v})}{2}\widetilde{w}$, thus
by the homogeneity of $F$  and \eqref{thm-s4}, we obtain
\begin{equation}
(g^*F)(0;\tilde{v})
= \frac{\sqrt{K_1}F(0;\tilde{v})}{2}(g^*F)(0;\widetilde{w})
\le \sqrt{\frac{K_1}{K_2}}F(0;\tilde{v})\label{thm-s10}
\end{equation}
holds for $\tilde{v}$. Since $\tilde{v}\in T_0^{1,0}\mathfrak{D}$ is an arbitrary fixed nonzero vector and \eqref{thm-s10} is invariant by replacing $\tilde{v}$ with $\lambda \tilde{v}$ for any $\lambda\in\mathbb{C}^\ast$, it follows that \eqref{thm-s10} holds for all nonzero vectors $\tilde{v}\in T_0^{1,0}\mathfrak{D}$.

By the $\mbox{Aut}(\mathfrak{D})$-invariance of $F$,  we have
$$
(g^\ast F)(0;\tilde{v})=(f^\ast F)(z_0;v),\quad F(0;\tilde{v})=F(z_0;v).
$$
Thus in terms of $z_0$ and $v$, we actually have proved that the inequality \eqref{thm-s1} holds at  the fixed point $z_0\in \mathfrak{D}$ for any nonzero tangent vector $v\in T_{z_0}^{1,0}\mathfrak{D}$. Since $z_0\in \mathfrak{D}$ is an arbitrary fixed point, this completes the proof of \eqref{thm-s1}.

Now suppose that \eqref{thm-eq} holds at some  $z_0\in \mathfrak{D}$ for all nonzero tangent vectors $v\in T_{z_0}^{1,0}\mathfrak{D}$. Choose $\varphi,\psi\in\operatorname{Aut}(\mathfrak{D})$ such that $\varphi(z_0)=0$ and $\psi(f(z_0))=0$. Define
\[
g := \psi \circ f \circ \varphi^{-1}.
\]
Then $g:\mathfrak{D}\to \mathfrak{D}$ is holomorphic and $g(0)=0, \tilde{v}:=d\varphi(v)\in T_0^{1,0}\mathfrak{D}$ and $\tilde{v}\neq 0$. By the $\operatorname{Aut}(\mathfrak{D})$-invariance of $F$, \eqref{thm-eq} implies
\begin{equation}
(g^*F)(0;\tilde{v})=F(0;\tilde{v}), \quad\forall\tilde{v}\in T_0^{1,0}\mathfrak{D}.\label{thm-s11}
\end{equation}
Thus by Proposition \ref{prop:rigidity}, $g\in \mbox{Aut}(\mathfrak{D})$, this implies  $f\in\mbox{Aut}(\mathfrak{D})$, which completes the proof.
\end{proof}

\begin{remark}
Taking \(\mathfrak{D}\) to be the unit polydisk \(P_n\subset\mathbb{C}^n\) \((n\ge 2)\) and the classical domain \(\mathfrak{R}_I(m;n)\) of type I, respectively, we shall give examples in Section \ref{example} showing that if there exist a point \(z_0\in\mathfrak{D}\) and a nonzero tangent vector \(v_0\in T_{z_0}^{1,0}\mathfrak{D}\) such that
\begin{equation}
(f_0^*F)(z_0;v_0)=\sqrt{\frac{K_1}{K_2}}\,F(z_0;v_0)
\end{equation}
for some holomorphic mapping \(f_0:\mathfrak{D}\to\mathfrak{D}\), then \(f_0\notin\operatorname{Aut}(\mathfrak{D})\).
\end{remark}

\begin{definition}Let $\mathfrak{D}$ be an arbitrary bounded symmetric domain and $F$ be an $\mbox{Aut}(\mathfrak{D})$-invariant strongly pseudoconvex complex Finsler metric on $\mathfrak{D}$ such that the holomorphic sectional curvature of $F$ is bounded below and above by negative constants $-K_1<0$ and $-K_2<0$, respectively. We call
the constant
\begin{equation}
k_0(\mathfrak{D},F):=\sqrt{\frac{K_1}{K_2}} \label{eq-16}
\end{equation}
the \emph{Lu constant} of $(\mathfrak{D},F)$.
\end{definition}

\begin{theorem}\label{thm:optimal-Lu}
Let \(\mathfrak{D}\) be an arbitrary bounded symmetric domain in its Harish-Chandra realization and let
\(F:T^{1,0}\mathfrak{D}\to[0,+\infty)\) be an \(\operatorname{Aut}(\mathfrak{D})\)-invariant
strongly pseudoconvex complex Finsler metric with holomorphic sectional
curvature bounded below and above by \(-K_1<0\) and \(-K_2<0\), respectively.
The constant
\[
k_0(\mathfrak{D},F):=\sqrt{\frac{K_1}{K_2}}
\]
is optimal: there is no constant \(c<k_0(\mathfrak{D},F)\) such that
\[
(f^*F)(z;v)\le cF(z;v),\qquad \forall (z;v)\in T^{1,0}\mathfrak{D},
\]
for all holomorphic self-maps \(f:\mathfrak{D}\to \mathfrak{D}\).
\end{theorem}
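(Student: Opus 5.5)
Following the paper's approach, I will show optimality by exhibiting a single holomorphic self-map $f_0:\mathfrak{D}\to\mathfrak{D}$ with $f_0(0)=0$, together with a vector $v_0\in T_0^{1,0}\mathfrak{D}$, such that $F(0;(f_0)_*v_0)=\sqrt{K_1/K_2}\,F(0;v_0)$. By $\operatorname{Aut}(\mathfrak{D})$-invariance it suffices to work at the origin. The starting point is Theorem \ref{thm:comparison}, in particular the sharpness statement \eqref{thm-c3}. Since $PT_0^{1,0}\mathfrak{D}$ is compact and $F^2/F_C^2$ is continuous and $\mathbb{C}^\ast$-invariant, there exist directions $[v_1],[v_2]$ with
\[
\frac{F^2}{F_C^2}(0;v_1)=\frac{4}{K_1},\qquad \frac{F^2}{F_C^2}(0;v_2)=\frac{4}{K_2}.
\]
I will normalize $v_1,v_2$ so that $F_C(0;v_1)=F_C(0;v_2)=1$. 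By Suzuki's theorem the indicatrix of $F_C$ at $0$ is $\mathfrak{D}$, so $v_1,v_2\in\partial\mathfrak{D}$. With this normalization $F(0;v_1)=2/\sqrt{K_1}$ and $F(0;v_2)=2/\sqrt{K_2}$.

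Next I construct $f_0$ so that it maps the complex line through $v_1$ onto the complex line through $v_2$, sending $v_1$ to $v_2$. The natural candidate is
\[
f_0(z):=\ell(z)\,v_2,
\]
where $\ell:\mathbb{C}^n\to\mathbb{C}$ is a complex linear functional with $\ell(v_1)=1$ and $|\ell|<1$ on $\mathfrak{D}$. Such an $\ell$ comes from a supporting hyperplane of the convex circular domain $\mathfrak{D}$ at the boundary point $v_1$. Since $\mathfrak{D}$ is convex and balanced, the complex Hahn--Banach theorem yields $\ell$ with $\ell(v_1)=1$ and $|\ell(z)|\le \pmb f(z)$, where $\pmb f$ is the norm with unit ball $\mathfrak{D}$. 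Equivalently, $F_C(0;\cdot)$ is the supremum of $|\ell|$ over such functionals. Then $|\ell(z)|<1$ on $\mathfrak{D}$, so $\ell(\mathfrak{D})\subset\mathbb{D}$. Because $v_2\in\overline{\mathfrak{D}}$ and $\mathfrak{D}$ is balanced and convex, $\zeta v_2\in\mathfrak{D}$ for every $|\zeta|<1$ (here $\mathfrak{D}$ is the open unit ball of $\pmb f$ and $\pmb f(\zeta v_2)=|\zeta|<1$). Hence $f_0$ is a holomorphic self-map of $\mathfrak{D}$ with $f_0(0)=0$ and $(f_0)_*v_1=\ell(v_1)v_2=v_2$.

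Finally I compute
\[
(f_0^*F)(0;v_1)=F(0;v_2)=\frac{2}{\sqrt{K_2}}=\sqrt{\frac{K_1}{K_2}}\cdot\frac{2}{\sqrt{K_1}}=\sqrt{\frac{K_1}{K_2}}\,F(0;v_1).
\]
So if some $c<\sqrt{K_1/K_2}$ worked for every self-map, this identity would contradict it, which proves the theorem. The step needing the most care is ensuring that the extremal values in \eqref{thm-c3} are exactly $4/K_1$ and $4/K_2$, i.e.\ that the bounds in Theorem \ref{thm:comparison} are attained rather than merely valid. I would take this from \eqref{thm-c3} as stated. I would also check that the hypothesis in the statement, namely that $-K_1$ and $-K_2$ are the minimum and maximum of $K_F$, is being used in this sharp sense. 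Everything else is routine convexity: the existence of $\ell$ and the fact that $\zeta v_2\in\mathfrak{D}$.
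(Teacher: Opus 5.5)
Your proposal is correct and essentially identical to the paper's proof. You take the extremal directions from Theorem \ref{thm:comparison}, normalized so that $F_C(0;v_1)=F_C(0;v_2)=1$, and use Suzuki's identification of the $F_C$-indicatrix at $0$ with $\mathfrak{D}$. You then take a Hahn--Banach functional $\ell$ with $\ell(v_1)=1$ and $|\ell|\le F_C(0;\cdot)$, and test the self-map $z\mapsto \ell(z)v_2$ at $(0;v_1)$, exactly as the paper does.

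You also correctly isolate the one nontrivial input: the sharpness identities \eqref{thm-c3}, with $-K_1$ and $-K_2$ read as the exact minimum and maximum of $K_F$ as in \eqref{thm-c1}. Note that the paper's own justification of \eqref{thm-c3} only shows that the extrema of $F^2/F_C^2$ on $PT_0^{1,0}\mathfrak{D}$ are attained, not that they equal $4/K_1$ and $4/K_2$. Both your argument and the paper's are therefore exactly as strong as that claim.
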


\begin{proof}
By Theorem \ref{thm:comparison}, there exist directions
\(v_{\min},v_{\max}\in T_0^{1,0}\mathfrak{D}\) such that
\[
F_C(0;v_{\min})=F_C(0;v_{\max})=1,
\]
and
\[
F(0;v_{\min})=\frac{2}{\sqrt{K_1}},\qquad
F(0;v_{\max})=\frac{2}{\sqrt{K_2}}.
\]
In the Harish-Chandra realization, \(\mathfrak{D}\) is a bounded convex circular
domain containing \(0\). By Theorem~B in Suzuki \cite{Suzuki},
the indicatrix of the Carath\'eodory metric \(F_C\) at \(0\) satisfies
\[
I_0(F_C):=\left\{w\in T_0^{1,0}\mathfrak{D}: F_C(0;w)<1\right\}=\mathfrak{D}.
\]
Since \(F_C(0;\cdot)\) is a complex norm on \(T_0^{1,0}\mathfrak{D}\), by the
Hahn--Banach theorem there exists a complex linear functional
\[
\ell:T_0^{1,0}\mathfrak{D}\to\mathbb C
\]
such that
\[
\ell(v_{\min})=1,\qquad \vert \ell(\xi)\vert\le F_C(0;\xi),\quad
\forall \xi\in T_0^{1,0}\mathfrak{D}.
\]
Define a holomorphic map
\[
L:\mathfrak{D}\to \mathfrak{D},\qquad L(z):=\ell(z)\,v_{\max},
\]
where we identify \(z\in\mathfrak{D}\subset\mathbb C^n\) with a vector in
\(T_0^{1,0}\mathfrak{D}\cong\mathbb C^n\).
If \(z\in \mathfrak{D}\), then \(F_C(0;z)<1\), and hence
\[
\vert\ell(z)\vert\le F_C(0;z)<1.
\]
Therefore
\[
F_C(0;L(z))
=F_C\bigl(0;\ell(z)v_{\max}\bigr)
=\vert\ell(z)\vert F_C(0;v_{\max})
=\vert\ell(z)\vert<1.
\]
Thus \(L(z)\in I_0(F_C)=\mathfrak{D}\), so \(L\) maps \(\mathfrak{D}\) into itself.

Moreover, since \(\ell(v_{\min})=1\), we have
$
L(v_{\min})=v_{\max},
$
and hence
$
dL_0(v_{\min})=v_{\max}.
$
Consequently,
\[
(L^*F)(0;v_{\min})
=F(0;dL_0(v_{\min}))
=F(0;v_{\max})
=\frac{2}{\sqrt{K_2}}.
\]
On the other hand,
\[
F(0;v_{\min})=\frac{2}{\sqrt{K_1}}.
\]
Thus
\[
\frac{(L^*F)(0;v_{\min})}{F(0;v_{\min})}
=
\frac{2/\sqrt{K_2}}{2/\sqrt{K_1}}
=
\sqrt{\frac{K_1}{K_2}}
=
k_0(\mathfrak{D},F).
\]
Taking \(f_0:=L\) and \(v_0:=v_{\min}\), we obtain
\[
(f_0^*F)(0;v_0)=k_0(\mathfrak{D},F)F(0;v_0).
\]
Now suppose there exists a constant \(c<k_0(\mathfrak{D},F)\) such that
\[
(f^*F)(z;v)\le cF(z;v)
\]
for all holomorphic self-maps \(f:\mathfrak{D}\to\mathfrak{D}\) and all
\((z;v)\in T^{1,0}\mathfrak{D}\). Applying this to \(f_0\) and \(v_0\), we get
\[
(f_0^*F)(0;v_0)\le cF(0;v_0).
\]
But the left-hand side equals \(k_0(\mathfrak{D},F)F(0;v_0)\), which is strictly
larger than \(cF(0;v_0)\), a contradiction. Hence no smaller constant
exists, and \(k_0(\mathfrak{D},F)\) is optimal.
\end{proof}

\section{Examples for the equality in \eqref{thm-s1}}\label{example}

In this section we shall provide some explicit examples to show that equality in \eqref{thm-s1}.

\begin{example}\label{ex}\cite{Zhong-b} Let $P_n$ be the unit polydisk in $\mathbb{C}^n(n\geq 2)$ endowed with the $\mbox{Aut}(P_n)$-invariant strongly pseudoconvex complex Finsler metric
$$
F_{t,k}^2(z;v)=\frac{1}{1+t}\left\{\sum_{i=1}^n\frac{\vert v^i\vert^2}{(1-\vert z^i\vert^2)^2}+t\sqrt[k]{\sum_{i=1}^n\frac{\vert v^i\vert^{2k}}{(1-\vert z^i\vert^2)^{2k}}}\right\},
$$
which is a strongly convex K\"ahler-Berwald metric on $P_n$ for any fixed $t\in[0,+\infty)$ and integer $k\geq 2$.
It is easy to check that
$$
\max_{1\leq i\leq n}\{\vert v^i\vert^2\}\leq F_{t,k}^2(0;v)\leq \frac{n+t\sqrt[k]{n}}{1+t}\max_{1\leq i\leq n}\{\vert v^i\vert^2\},\quad \forall v\in T_0^{1,0}P_n.
$$
In \cite{Zhong-b}, Zhong showed that the minimum and maximum holomorphic sectional curvatures of $F_{t,k}$ are given by $-4$ and
$-\frac{4(1+t)}{n+t\sqrt[k]{n}}$, respectively. Thus
\begin{equation}
\frac{K_1}{K_2}=\frac{n+t\sqrt[k]{n}}{1+t}.\label{sct}
\end{equation}
Note that for $t=0$,  the Lu constant $k_0(P_n, F_{t,k})$  reduces to  $\sqrt{n}$, which was obtained by  Lu \cite{Lu-a} with respect to the Bergman metric on $P_n$. For  $t\in(0,\infty)$ and integer $k\geq 2$, however, the Lu constant $k_0(P_n, F_{t,k})$ depends both on the rank $n$ of $P_n$ and the minimum and maximum holomorphic sectional curvatures of $F_{t,k}$, hence the Lu constant $k_0(P_n, F_{t,k})$ is both an analytic invariant and geometric invariant which can be seen more clearly in complex Finsler settings.

Taking $f_0(z)=(z^1,\cdots,z^1)$, $z_0=0$ the origin of $P_n$ and $v_0:=(1,\underbrace{0,\cdots,0}_{n-1})\cong \frac{\partial}{\partial z^1}\in T_0^{1,0}P_n$, then $(f_0)_\ast(v_0)=(1,\cdots,1)\cong \sum_{i=1}^n\frac{\partial}{\partial z^i}\in T_0^{1,0}P_n$, where $(f_0)_\ast$ denotes the tangential map of $f_0$ at the origin $0\in P_n$. It is easy to check that
$$
(f_0^\ast F_{t,k})(z_0;v_0)=\sqrt{\frac{n+t\sqrt[k]{n}}{1+t}},\quad F_{t,k}(z_0;v_0)=1.
$$
So that
$$(f_0^\ast F_{t,k})(z_0;v_0)=\sqrt{\frac{n+t\sqrt[k]{n}}{1+t}}F_{t,k}(z_0;v_0).$$
But $f_0\notin\mbox{Aut}(P_n)$.
\end{example}

\begin{example}[\cite{GZ}]
Consider the classical domain $\mathfrak{R}_I(m;n)$ of type $I$, which is equipped with the following $\mbox{Aut}(\mathfrak{R}_I)$-invariant K\"ahler-Berwald metric
 \begin{eqnarray*}
			F_I^2(Z;V)&=&\frac{m+n}{1+t}\Bigg\{\mbox{tr}\left\{(I-ZZ^\ast)^{-1}V(I-Z^\ast Z)^{-1}V^\ast\right\}\\
			&&+t\sqrt[k]{\mbox{tr}\left\{\left[(I-ZZ^\ast)^{-1}V(I-Z^\ast Z)^{-1}V^\ast\right]^k\right\}}\Bigg\}.
	\end{eqnarray*}
 For this metric, by Theorem 3.26 in \cite{GZ} and Theorem 3 in \cite{Zhong-c}, we have
		\begin{eqnarray*}
			(f^\ast F_I^2)(Z;V)\leq\frac{m+t\sqrt[k]{m}}{1+t}F_I^2(Z;V),\quad \forall(Z;V)\in T^{1,0}\mathfrak{R}_I(m;n).
		\end{eqnarray*}
\begin{enumerate}
\item[(1)] If $m=1$, then $\mathfrak{R}_I=B_n$ is the open unit ball in $\mathbb{C}^n$ and $F_I$ reduces to the Bergman metric $F_B$ on $B_n$,
$$k_0(\mathfrak{R}_I, F_B)=1;$$
\item[(2)] If $m\geq 2$ and $t=0$, then $F_I$ reduces to the Bergman metric $F_B$ on $\mathfrak{R}_I$,
$$k_0(\mathfrak{R}_I, F_B)=\sqrt{m};$$
\item[(3)] If $m\geq 2$ and $t\in (0,+\infty)$, $F_I$ is a non-Hermitian quadratic $\mbox{Aut}(\mathfrak{R}_I)$-invariant metric on $\mathfrak{R}_I$,
$$1<k_0(\mathfrak{R}_I, F_I)=\sqrt{\frac{m+t\sqrt[k]{m}}{1+t}}<\sqrt{m}.$$
\end{enumerate}

Now define a holomorphic self-map $f_0:\mathfrak{R}_I(m;n)\to \mathfrak{R}_I(m;n)$ by
$$f_0(Z)=\begin{pmatrix}
         Z_{12} & 0 & \cdots & 0 & 0 & \cdots & 0 \\
         0 & Z_{12} & \cdots & 0 & 0 & \cdots & 0 \\
         \vdots & \vdots & \cdots & 0& \vdots & \cdots & 0 \\
         0 & 0 & \cdots & Z_{12} & 0 & \cdots & 0 \\
       \end{pmatrix}_{m\times n},
$$
i.e., $f_0(Z)$ is a diagonal matrix with all diagonal entries equal to $Z_{12}$. It is easy to check that $f_0(0)=0$ and $f_0$ maps $\mathfrak{R}_I(m;n)$ into itself provided $\vert Z_{12}\vert<1$. Take $V_0=E_{12}\in T_0^{1,0}\mathfrak{R}_I$, the $m\times n$ matrix with $1$ at the $(1,2)$-position and zero elsewhere. Then
 $$(f_0)_\ast(V_0)=\begin{pmatrix}
         1 & 0 & \cdots & 0 & 0 & \cdots & 0 \\
         0 & 1 & \cdots & 0 & 0 & \cdots & 0 \\
         \vdots & \vdots & \cdots & 0 & \vdots & \cdots & 0 \\
         0 & 0 & \cdots & 1 & 0 & \cdots & 0 \\
       \end{pmatrix}_{m\times n}.$$
 A direct computation at $Z=0$ gives
 $$F_{t,k}(0;V_0)=\sqrt{m+n}$$
 while
 $$F_{t,k}(f_0(0);(f_0)_\ast(V_0))=\sqrt{\frac{m+n}{1+t}\cdot(m+t\sqrt[k]{m})}.$$
 Thus
$$(f_0^\ast F_{t,k})(0;V_0)=\sqrt{\frac{m+t\sqrt[k]{m}}{1+t}}F_{t,k}(0;V_0)=k_0(\mathfrak{R}_I,F_I)F_{t,k}(0;V_0).$$
However, $f_0$ is not surjective (its image consists of diagonal matrices with equal diagonal entries, a complex submanifold of dimension $1$), and its differential at $0$ has rank $1$ (since it only depends on $Z_{12}$). Hence $f_0\notin\mbox{Aut}(\mathfrak{R}_I)$.
\end{example}

The above two examples show that the equality of \eqref{thm-s1} in a single direction does not imply that $f$ is an automorphism of $\mathfrak{\mathfrak{D}}$. The single-direction equality is a sharpness condition for the Lu constant, but it is not sufficient for rigidity. Only the full tangent-space equality forces the map to be an automorphism.

\vskip0.4cm
\noindent
\textbf{Acknowledgement.}\ {\small The second  author was supported by the National Natural Science Foundation of China (Grant No. 12471080) and the Key Program of Fujian Provincial Natural Science Foundation of China (No. 2026J002007).}

\end{document}